\pdfoutput=1
\documentclass[11pt]{article}
\usepackage[left=1in,right=1in,top=1in,bottom=1in]{geometry}
\usepackage{times}
\usepackage{expl3}
\usepackage{cite}
\usepackage[table]{xcolor}
\usepackage{multirow}
\usepackage{stackengine} 
\usepackage{hhline}
\usepackage{lipsum}
\usepackage{titlesec}
\usepackage[all]{xy}
\usepackage{wrapfig}
\usepackage{enumerate}
\usepackage{epsfig}
\usepackage{tikz-cd}
\usepackage{amsmath}
\usepackage{tabularx}
\usepackage{array}
\usepackage{booktabs}
\usepackage{enumitem}
\usepackage{bbm}
\usepackage{calc}
\usepackage{graphicx}
\usepackage{amsmath}
\usepackage[title]{appendix}
\usepackage{amssymb}
\usepackage{epstopdf}
\usepackage{boldline}
\usepackage{arydshln}
\usepackage{calligra}
\usepackage{bm}
\usepackage{url}
\usepackage{blindtext}
\usepackage{accents}
\usepackage{amsthm}
\usepackage{amscd}

\newtheorem{definition}{Definition}

\newtheorem{theorem}{Theorem}
\newtheorem{proposition}{Proposition}
\newtheorem{corollary}{Corollary}
\newtheorem{lemma}{Lemma}

\newtheorem{remark}{Remark}
\usepackage{mathtools}
\usepackage{epstopdf}
\usepackage{balance}
\usepackage{thmtools}
\usepackage{thm-restate}
\usepackage{hyperref}
\usepackage{cleveref}
\usepackage[mathscr]{euscript}
\usepackage{amsmath}
\DeclareMathOperator*{\argmin}{arg\,min}

\DeclareMathOperator{\Tr}{Tr}

\usepackage[ruled,vlined]{algorithm2e}
\newcommand{\ostar}{\mathbin{\mathpalette\make@circled\star}}

\makeatletter
\newcommand{\removelatexerror}{\let\@latex@error\@gobble}
\makeatother
\makeatletter
\newcommand*{\rom}[1]{\expandafter\@slowromancap\romannumeral #1@}
\makeatother

\ExplSyntaxOn
\newcommand\latinabbrev[1]{
  \peek_meaning:NTF . {
    #1\@}%
  { \peek_catcode:NTF a {
      #1.\@ }%
    {#1.\@}}}
\ExplSyntaxOff

\titleclass{\subsubsubsection}{straight}[\subsubsection]

\begin{document}
\vspace{1cm}
\title{Operator Inequalities in $\Phi$-Product Tensor Algebras: Invariance and Transform Sensitivity}
\vspace{1.8cm}
\author{Shih-Yu~Chang\thanks{Department of Applied Data Science,
San Jose State University, San Jose, CA, U.S.A. (e-mail: {\tt
shihyu.chang@sjsu.edu})} 
\quad Michael K. Ng\thanks{Department of Mathematics, 
Hong Kong Baptist University, Kowloon Tong, Hong Kong (e-mail: {\tt michael-ng@hkbu.edu.hk})}}

\maketitle

\begin{abstract}
We study classical operator inequalities in $\Phi$-product tensor algebras 
(a transformation $\Phi$-based generalization of the $t$-product framework) 
for third-order tensors. Although these algebras are algebraically isomorphic under different 
unitary transforms $\Phi$, we show that their quantitative behavior is not invariant.  We prove that fundamental inequalities, including Golden--Thompson, Jensen,  Klein, and Lieb, extend to the $\Phi$-product setting with the same constants 
as in the matrix case. However, the associated defect---the slack between the 
two sides of the inequality---depends explicitly on transform-domain 
noncommutativity. In particular, we establish a sharp characterization of the 
defect in terms of slice-wise commutators, revealing that inequality tightness 
is governed by transform-induced noncommutativity. We further demonstrate strong transform sensitivity by constructing explicit 
tensor pairs for which the defect vanishes under one transform (e.g., discrete Fourier transform) 
but grows linearly with the tensor depth under another transform (e.g., discrete Cosine transform), yielding 
an $\Omega(p)$ separation where $p$ is the matrix dimension of $\Phi$.
Moreover, we prove that no transform is universally 
optimal: for any pair of transforms, there exist tensors for which each is 
strictly better than the other. These results show that the choice of transform defines a coordinate system 
in which commutativity is measured, inducing a nontrivial geometry of inequality 
tightness. Consequently, optimal transform selection is inherently 
data-dependent and can be formulated as an optimization problem over the 
unitary group. 
\end{abstract}

\section{Introduction}

Tensor algebra based on transform-induced products has become a central tool in 
modern multilinear analysis~\cite{zhang2022sparse,song2023multirank}. In particular, the $t$-product 
framework and its generalization to the $\Phi$-product enable matrix-like operations 
on third-order tensors by replacing the discrete Fourier transform with an arbitrary 
unitary transform $\Phi \in \mathbb{C}^{p \times p}$. This flexibility has led to 
a wide range of applications~\cite{lu2019tensor}, including tensor completion, robust PCA, and signal 
processing, where different unitary transforms induce different notions of structure and sparsity.

A fundamental feature of these frameworks is that different choices of $\Phi$ 
lead to algebraically isomorphic tensor algebras. Consequently, the unitary transform is 
often viewed as a change of basis that does not alter intrinsic properties. 
However, this viewpoint overlooks an important phenomenon:

\medskip
\noindent
\emph{(Transform-dependent noncommutativity)
The choice of transform $\Phi$ defines a coordinate system in which 
noncommutativity is measured, and different unitary transforms reveal fundamentally 
different commutative structures.}

\medskip
\noindent
This observation suggests that while algebraic properties may be invariant, 
quantitative behavior can depend strongly on $\Phi$. The goal of this paper is 
to develop a systematic theory of this phenomenon in the context of classical 
operator inequalities.

\medskip
\noindent
Classical operator inequalities for matrices---including the Golden--Thompson 
inequality~\cite{golden1965,thompson1965} and related Jensen-, Klein-, and 
Lieb-type inequalities~\cite{lieb1973}---play a foundational role in matrix 
analysis and operator theory. Their significance lies not only in their validity, 
but also in how their \emph{defect} (slack) reflects noncommutativity.
A natural question arises in the tensor setting:
\emph{Does the choice of unitary transform $\Phi$ affect operator inequalities, 
or is it merely a change of basis?}
At the algebraic level, the answer is straightforward: operator inequalities 
remain valid for all $\Phi$ with exactly the same constants as in the matrix case. 
However, this invariance conceals a deeper phenomenon:
\emph{Algebraic invariance does not imply quantitative invariance.}

While operator inequalities remain valid under all unitary transforms, their
defect depends explicitly on transform-domain noncommutativity. To formalize
this phenomenon, we introduce the $\Phi$-dependent Golden--Thompson defect
\begin{align}
\Delta_{\Phi}(\mathcal{A},\mathcal{B})
:=
\operatorname{Tr}_{\Phi}\!\bigl(
\exp_{\Phi}(\mathcal{A})
\star_{\Phi}
\exp_{\Phi}(\mathcal{B})
\bigr)
-
\operatorname{Tr}_{\Phi}\!\bigl(
\exp_{\Phi}(\mathcal{A}\oplus_{\Phi}\mathcal{B})
\bigr),
\end{align}
where $\mathcal{A},\mathcal{B} \in
\mathbb{C}^{n\times n\times p}$
are $\Phi$-Hermitian tensors where their transformed slices of matrices 
$\widehat{\mathcal{A}}_{\Phi}^{(\omega)}$ and 
$\widehat{\mathcal{B}}_{\Phi}^{(\omega)}$ 
are Hermitian for $\omega=1,\cdots,p$ (see Definition 4), 
$\Tr_{\Phi}(\mathcal{\cdot})$ and $\exp_{\Phi}(\mathcal{\cdot})$ are 
$\Phi$-Trace and $\Phi$-Exponential maps (see Definitions 2 and 3 respectively),
and $\star_{\Phi}(\cdot,\cdot)$ is $\Phi$-product between two tensors (see Definition 1).
The quantity $\Delta_{\Phi}(\mathcal{A},\mathcal{B})$ measures the failure of
exact commutativity in the $\Phi$-product tensor algebra and quantifies the
deviation between multiplicative and additive spectral propagation under the
$\Phi$-exponential map. We show that this defect is quantitatively equivalent
to the aggregate transform-domain commutator energy
\[
\sum_{\omega=1}^{p}
\left\|
\big[
\widehat{\mathcal{A}}_{\Phi}^{(\omega)},
\widehat{\mathcal{B}}_{\Phi}^{(\omega)}
\big]
\right\|_F^2.
\]

A central consequence of this viewpoint is that no transform is universally
optimal. While certain transforms, such as the discrete Fourier transform,
may yield vanishing defect for specific tensor pairs, other transforms, such as
the discrete cosine transform, may produce significantly smaller defects for
different tensor configurations. Thus, optimal transform selection is inherently
data-dependent. In this sense, the choice of $\Phi$ is not merely a change of
basis, but rather a choice of spectral coordinate system that determines how
commutativity is evaluated in the transform domain.

The main contributions of this paper are summarized as
follows.
\begin{itemize}
    \item
    (Invariance principle)
    Classical operator inequalities extend to the $\Phi$-product tensor algebra
    with the same optimal constants as in the matrix setting.

    \item
    (Defect characterization)
    The Golden--Thompson defect
    $\Delta_{\Phi}(\mathcal{A},\mathcal{B})$
    is quantitatively equivalent to the transform-domain commutator energy
    \[
    \sum_{i=1}^{p}
    \left\|
    \big[
    \widehat{\mathcal{A}}_{\Phi}^{(i)},
    \widehat{\mathcal{B}}_{\Phi}^{(i)}
    \big]
    \right\|_F^2.
    \]

	\item
	(Transform sensitivity)
	The Golden--Thompson defect depends nontrivially on the choice of the unitary
	transform $\Phi$. For certain tensor pairs, choosing $\Phi$ as the discrete
	Fourier transform (DFT) yields exact slice-wise commutativity in the transform
	domain, giving
	\[
	\Delta_{\mathrm{DFT}}(\mathcal{A},\mathcal{B})=0.
	\]
	In contrast, choosing $\Phi$ as the discrete cosine transform (DCT) may produce
	significant transform-domain noncommutativity, leading to
	\[
	\Delta_{\mathrm{DCT}}(\mathcal{A},\mathcal{B})
	\ge
	\Omega(p),
	\]
	where $\Omega(p)$ means that the defect grows at least linearly with the
	transform dimension $p$. This demonstrates that transform selection
	fundamentally affects the geometry of noncommutativity in the
	$\Phi$-product tensor algebra.

    \item
    (Non-universality theorem)
    No unitary transform minimizes the Golden--Thompson defect for all tensor
    pairs. Optimal transforms are necessarily data-dependent.
    Transform selection can be formulated as minimizing transform-domain
    noncommutativity over the unitary group, thereby connecting the problem to
    approximate joint diagonalization of tensor slices.

\end{itemize}

These results establish that $\Phi$ induces a nontrivial geometry of inequality 
tightness, governed by transform-dependent noncommutativity. To the best of our knowledge, 
this is the first work that separates algebraic invariance from quantitative 
behavior in transform-based tensor algebras.

The organization of this paper is given as follows.
Section~\ref{sec:algebra} introduces the $\Phi$-product tensor algebra and notation. 
Section~\ref{sec:invariance} establishes invariance of classical operator inequalities in the $\Phi$-product setting. 
Section~\ref{sec:sensitivity} develops the transform-sensitive defect theory, including quantitative bounds and separation results.
Section 5 considers and investigates into
optimization formulations. 
Section~\ref{sec:discussion} are concluding remarks, and we discuss broader implications and connections to numerical linear algebra and operator theory.

\section{The $\Phi$-Product Tensor Algebra}
\label{sec:algebra}

This section establishes the algebraic foundation of the $\Phi$-product tensor framework. We first introduce the $\Phi$-product and its associated block-diagonal representation, and then develop fundamental operations including the $\Phi$-trace, exponential, and logarithm. Finally, we present structural notions such as the $\Phi$-commutator, spectral mapping, and $\Phi$-Hermitian/positivity concepts that will support the analysis in subsequent sections.

\subsection{Definition of $\Phi$-Product and Basic Operations}
\label{sec:algebra:def}

Let $\mathcal{A} \in \mathbb{C}^{n_1 \times n_2 \times p}$. For a unitary transform $\Phi \in \mathbb{C}^{p \times p}$, define the transformed tensor $\hat{\mathcal{A}}_{\Phi} \in \mathbb{C}^{n_1 \times n_2 \times p}$ by applying $\Phi$ to each tube (i.e., each third-mode fiber) of $\mathcal{A}$. Denote by $\hat{\mathcal{A}}_{\Phi}^{(i)} \in \mathbb{C}^{n_1 \times n_2}$ the $i$-th frontal slice of $\hat{\mathcal{A}}_{\Phi}$.
The associated block diagonal matrix $\overline{\mathcal{A}}_{\Phi} \in \mathbb{C}^{p n_1 \times p n_2}$ is defined as
\[
\overline{\mathcal{A}}_{\Phi}
= \operatorname{blockdiag}
\bigl(
\hat{\mathcal{A}}_{\Phi}^{(1)},
\hat{\mathcal{A}}_{\Phi}^{(2)},
\dots,
\hat{\mathcal{A}}_{\Phi}^{(p)}
\bigr).
\]

\begin{definition}[$\Phi$-Product]
For $\mathcal{A} \in \mathbb{C}^{n_1 \times n_2 \times p}$ and $\mathcal{B} \in \mathbb{C}^{n_2 \times n_3 \times p}$, the $\Phi$-product $\mathcal{C} = \mathcal{A} \star_{\Phi} \mathcal{B} \in \mathbb{C}^{n_1 \times n_3 \times p}$ is defined by
\[
\overline{\mathcal{C}}_{\Phi}
= \overline{\mathcal{A}}_{\Phi} \;\cdot\; \overline{\mathcal{B}}_{\Phi},
\]
where $\overline{\mathcal{C}}_{\Phi}$ is the block diagonal matrix constructed from the transformed tensor $\hat{\mathcal{C}}_{\Phi}$ in the same manner as above.
\end{definition}

\begin{remark}
When $\Phi$ is the discrete Fourier transform (DFT) matrix, the $\Phi$-product 
reduces to the conventional t-product of Kilmer and Martin \cite{kilmer2011}.
\end{remark}

\subsection{$\Phi$-Trace, $\Phi$-Exponential, and $\Phi$-Logarithm}
\label{sec:algebra:traceexp}

\begin{definition}[$\Phi$-Trace]
Let $\mathcal{A} \in \mathbb{C}^{n \times n \times p}$. 
The $\Phi$-trace of $\mathcal{A}$ is defined by
\[
\Tr_{\Phi}(\mathcal{A})
:=
\Tr\!\bigl(\overline{\mathcal{A}}_{\Phi}\bigr),
\]
where $\Tr(\cdot)$ denotes the standard matrix trace.
\end{definition}

\begin{definition}[$\Phi$-Exponential and $\Phi$-Logarithm]
Let $\mathcal{A} \in \mathbb{C}^{n \times n \times p}$ be $\Phi$-Hermitian. 
The $\Phi$-exponential and $\Phi$-logarithm are defined via the block-diagonal lifting as
\[
\exp_{\Phi}(\mathcal{A})
:=
\Phi^{H}\!\bigl[
\exp\!\bigl(\overline{\mathcal{A}}_{\Phi}\bigr)
\bigr],
\qquad
\log_{\Phi}(\mathcal{A})
:=
\Phi^{H}\!\bigl[
\log\!\bigl(\overline{\mathcal{A}}_{\Phi}\bigr)
\bigr],
\]
where the functions $\exp(\cdot)$ and $\log(\cdot)$ are applied blockwise to 
$\overline{\mathcal{A}}_{\Phi}$, i.e., to each diagonal block 
$\hat{\mathcal{A}}_{\Phi}^{(i)}$.
\end{definition}

\subsection{$\Phi$-Hermitian Structure, Commutator, and Spectral Mapping}
\label{sec:algebra:structure}

\begin{definition}[$\Phi$-Hermitian]
Let $\mathcal{A} \in \mathbb{C}^{n \times n \times p}$. 
We say that $\mathcal{A}$ is $\Phi$-Hermitian if its block-diagonal lifting 
$\overline{\mathcal{A}}_{\Phi}$ is Hermitian, i.e.,
\[
\overline{\mathcal{A}}_{\Phi}^{H}
=
\overline{\mathcal{A}}_{\Phi}.
\]
Equivalently, each transformed frontal slice $\hat{\mathcal{A}}_{\Phi}^{(i)}$ is Hermitian for all $i = 1,\dots,p$.
\end{definition}

\begin{definition}[$\Phi$-Commutator]
Let $\mathcal{A}, \mathcal{B}$ be tensors of compatible sizes. 
The $\Phi$-commutator is defined by
\[
[\mathcal{A}, \mathcal{B}]_{\Phi}
:=
\overline{\mathcal{A}}_{\Phi}\,
\overline{\mathcal{B}}_{\Phi}
-
\overline{\mathcal{B}}_{\Phi}\,
\overline{\mathcal{A}}_{\Phi},
\]
where the right-hand side is the ordinary matrix commutator.
\end{definition}

\begin{lemma}[Spectral Mapping Lemma]
\label{lem:spectralmapping}
Let $\mathcal{A} \in \mathbb{C}^{n \times n \times p}$ be $\Phi$-Hermitian, and let 
$f$ be an analytic function defined on an open set containing the spectrum of 
$\overline{\mathcal{A}}_{\Phi}$. Then
\begin{align}
f_{\Phi}(\mathcal{A})
=
\Phi^{H}\!\bigl[
f\!\bigl(\overline{\mathcal{A}}_{\Phi}\bigr)
\bigr],
\end{align}
where $f(\overline{\mathcal{A}}_{\Phi})$ is applied blockwise to each diagonal block.
\end{lemma}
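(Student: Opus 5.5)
The statement is essentially definitional, so the plan is to make precise what $f_{\Phi}(\mathcal{A})$ means independently of the right-hand side, and then check that the two descriptions agree. The natural intrinsic definition is through the algebra structure. For a polynomial $q(z)=\sum_k c_k z^k$, set
\[
q_{\Phi}(\mathcal{A}) := \sum_k c_k\, \mathcal{A}^{\star_\Phi k},
\]
with $\mathcal{A}^{\star_\Phi 0}$ equal to the $\Phi$-identity tensor, whose transformed slices are all $I_n$. For analytic $f$, define $f_{\Phi}(\mathcal{A})$ by the holomorphic (Riesz--Dunford) functional calculus in the algebra $(\mathbb{C}^{n\times n\times p},\star_\Phi)$, or equivalently as a limit of polynomial approximants. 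The proof then has three steps.

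\textbf{Step 1: the lifting is an isomorphism.} The map $\mathcal{A}\mapsto \overline{\mathcal{A}}_{\Phi}$ is linear. It is also injective, because applying $\Phi^H$ tube-wise inverts the tube-wise application of $\Phi$. By the definition of the $\Phi$-product it is multiplicative, $\overline{\mathcal{A}\star_\Phi\mathcal{B}}_{\Phi}=\overline{\mathcal{A}}_{\Phi}\overline{\mathcal{B}}_{\Phi}$, and it sends the $\Phi$-identity to $I_{pn}$. So it is a unital algebra isomorphism onto the block-diagonal matrices. Induction on $k$ then gives $\overline{\mathcal{A}^{\star_\Phi k}}_{\Phi}=(\overline{\mathcal{A}}_{\Phi})^k$, and by linearity $\overline{q_\Phi(\mathcal{A})}_\Phi=q(\overline{\mathcal{A}}_\Phi)$. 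Because the lifting is block diagonal, $q(\overline{\mathcal{A}}_\Phi)=\operatorname{blockdiag}\bigl(q(\hat{\mathcal{A}}^{(1)}_\Phi),\dots,q(\hat{\mathcal{A}}^{(p)}_\Phi)\bigr)$.

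\textbf{Step 2: pass to analytic $f$.} Since $\mathcal{A}$ is $\Phi$-Hermitian, the spectrum of $\overline{\mathcal{A}}_\Phi$ is a finite real set $\sigma=\bigcup_i\sigma(\hat{\mathcal{A}}^{(i)}_\Phi)$. Each block is unitarily diagonalizable, $\hat{\mathcal{A}}^{(i)}_\Phi=U_i\Lambda_iU_i^H$. Choose the Lagrange--Hermite interpolating polynomial $q$ with $q=f$ on $\sigma$. Then $q(\overline{\mathcal{A}}_\Phi)=f(\overline{\mathcal{A}}_\Phi)$ and $q(\hat{\mathcal{A}}^{(i)}_\Phi)=U_if(\Lambda_i)U_i^H=f(\hat{\mathcal{A}}^{(i)}_\Phi)$. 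Alternatively, apply the Cauchy integral
\[
f(\overline{\mathcal{A}}_\Phi)=\frac{1}{2\pi i}\oint f(z)(zI-\overline{\mathcal{A}}_\Phi)^{-1}\,dz
\]
and note that resolvents of block-diagonal matrices are block diagonal; by Step 1 the tensor resolvent lifts to the matrix resolvent. Either route shows that $f_\Phi(\mathcal{A})$ lifts to $f(\overline{\mathcal{A}}_\Phi)$, applied blockwise.

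\textbf{Step 3: invert the lifting.} The tensor whose transformed slices are $f(\hat{\mathcal{A}}^{(i)}_\Phi)$ is recovered by applying $\Phi^H$ along the tubes, which is exactly what the notation $\Phi^H[\cdot]$ means. This gives the claimed formula, and it also shows that the definition of $\exp_\Phi$ and $\log_\Phi$ given earlier is the special case $f=\exp,\log$. For $\log$, the spectrum must be positive so that $f$ is analytic near $\sigma$.

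The main obstacle is conceptual rather than computational: the statement only makes sense once $f_\Phi$ is given an intrinsic meaning. I would fix the polynomial/holomorphic-calculus definition explicitly at the start. The key technical point is then that the value depends only on $f$ restricted to $\sigma$, which the interpolation argument handles because the blocks are Hermitian and hence diagonalizable. With that in place, the rest follows from the isomorphism in Step 1.
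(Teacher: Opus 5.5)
Your proof is correct, but it takes a genuinely different and more substantive route than the paper's. The paper never gives $f_{\Phi}$ a meaning independent of the right-hand side. It extends the formula used to define $\exp_{\Phi}$ and $\log_{\Phi}$ to a general $f$, and diagonalizes $\overline{\mathcal{A}}_{\Phi}=U\Lambda U^{H}$ only to note that $f(\overline{\mathcal{A}}_{\Phi})=Uf(\Lambda)U^{H}$ is well defined. It then cites that definition, so the lemma is in effect a tautology and the diagonalization plays no role in the identification. You instead define $f_{\Phi}$ intrinsically in the algebra $(\mathbb{C}^{n\times n\times p},\star_{\Phi})$, first for polynomials and then via the Riesz--Dunford calculus. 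You then prove that this intrinsic calculus agrees with blockwise application in the transform domain. The key input is that $\mathcal{A}\mapsto\overline{\mathcal{A}}_{\Phi}$ is a unital algebra isomorphism onto the block-diagonal matrices.

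This gives the lemma real content. It shows that the paper's definitions of $\exp_{\Phi}$ and $\log_{\Phi}$ are the natural ones rather than ad hoc: for instance, $\sum_k \mathcal{A}^{\star_{\Phi} k}/k!$ lifts to the matrix exponential. Your Cauchy-integral route also shows that the identity does not need the $\Phi$-Hermitian hypothesis at all. The lift preserves invertibility, because inverses of block-diagonal matrices are block diagonal, so tensor resolvents lift to matrix resolvents. The lift is linear, so it commutes with the contour integral. Hermiticity is therefore only needed for the diagonalization picture.

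One small point on your interpolation route. The equality $q(\overline{\mathcal{A}}_{\Phi})=f(\overline{\mathcal{A}}_{\Phi})$ is a statement on the matrix side. To conclude $f_{\Phi}(\mathcal{A})=q_{\Phi}(\mathcal{A})$ on the tensor side, you need that the intrinsic calculus depends only on the values of $f$ on $\sigma$. This holds because $\mathcal{A}$ and $\overline{\mathcal{A}}_{\Phi}$ have the same minimal polynomial under the isomorphism, with simple roots here. Alternatively, you can rely on the Cauchy route, which avoids the issue. Also, since the blocks are diagonalizable, plain Lagrange interpolation suffices and you do not need Hermite interpolation.
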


\begin{proof}
Since $\mathcal{A}$ is $\Phi$-Hermitian, the matrix 
$\overline{\mathcal{A}}_{\Phi}$ is Hermitian and hence unitarily diagonalizable. 
Thus, there exists a unitary matrix $U$ such that
\[
\overline{\mathcal{A}}_{\Phi}
=
U \Lambda U^{H},
\]
where $\Lambda$ is diagonal containing the eigenvalues of 
$\overline{\mathcal{A}}_{\Phi}$.

By the standard functional calculus for Hermitian matrices,
\[
f\!\bigl(\overline{\mathcal{A}}_{\Phi}\bigr)
=
U f(\Lambda) U^{H},
\]
where $f(\Lambda)$ is obtained by applying $f$ to each diagonal entry.

By definition of the $\Phi$-functional calculus (cf. 
Section~\ref{sec:algebra:traceexp}),
\[
f_{\Phi}(\mathcal{A})
=
\Phi^{H}\!\bigl[
f\!\bigl(\overline{\mathcal{A}}_{\Phi}\bigr)
\bigr],
\]
which establishes the claim.
\end{proof}

\begin{remark}
The $\Phi$-Hermitian condition is essential for the spectral mapping lemma, as it guarantees that $\overline{\mathcal{A}}_{\Phi}$ is unitarily diagonalizable with real eigenvalues. For non-Hermitian tensors, the exponential and logarithm remain well-defined via the blockwise matrix exponential/logarithm, but the spectral interpretation (e.g., eigenvalue mapping) does not hold in general.
\end{remark}

\section{Invariance Theorems}
\label{sec:invariance}

This section establishes that fundamental operator inequalities remain invariant under the $\Phi$-product tensor algebra. We first extend the Golden–Thompson inequality to the $\Phi$-framework, followed by Jensen, Klein, and Lieb-type inequalities for $\Phi$-Hermitian tensors. These results collectively reveal that the $\Phi$-product algebra preserves the classical inequality structure through its block-diagonal representation.

\subsection{$\Phi$-Invariant Golden--Thompson Inequality}
\label{sec:invariance:gt}

\begin{theorem}[$\Phi$-Invariant Golden--Thompson Inequality]
\label{thm:gt}
Let $\mathcal{A}, \mathcal{B} \in \mathbb{C}^{n \times n \times p}$ be $\Phi$-Hermitian. Then
\begin{align}
\Tr_{\Phi}\!\bigl(
\exp_{\Phi}(\mathcal{A} \oplus_{\Phi} \mathcal{B})
\bigr)
\;\le\;
\Tr_{\Phi}\!\bigl(
\exp_{\Phi}(\mathcal{A})
\star_{\Phi}
\exp_{\Phi}(\mathcal{B})
\bigr),
\end{align}
where $\oplus_{\Phi}$ denotes the $\Phi$-sum (i.e., $\overline{\mathcal{A} \oplus_{\Phi} \mathcal{B}}_{\Phi} = \overline{\mathcal{A}}_{\Phi} + \overline{\mathcal{B}}_{\Phi}$).
\end{theorem}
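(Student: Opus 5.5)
The plan is to reduce the tensor statement to the classical matrix Golden--Thompson inequality, applied one transformed slice at a time. Everything is pushed into the block-diagonal lifting, where the $\Phi$-operations become ordinary block-diagonal matrix operations.

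First, I would record how each side lifts.
\begin{itemize}
\item By the definition of $\oplus_{\Phi}$, we have $\overline{\mathcal{A}\oplus_{\Phi}\mathcal{B}}_{\Phi}=\overline{\mathcal{A}}_{\Phi}+\overline{\mathcal{B}}_{\Phi}$. This sum is Hermitian and block diagonal, with blocks $\hat{\mathcal{A}}_{\Phi}^{(i)}+\hat{\mathcal{B}}_{\Phi}^{(i)}$.
\item By the definition of $\exp_{\Phi}$ (equivalently Lemma~\ref{lem:spectralmapping} with $f=\exp$), the lifting of $\exp_{\Phi}(\mathcal{A}\oplus_{\Phi}\mathcal{B})$ is $\operatorname{blockdiag}\bigl(\exp(\hat{\mathcal{A}}_{\Phi}^{(i)}+\hat{\mathcal{B}}_{\Phi}^{(i)})\bigr)_{i=1}^p$.
\item The point to check is that applying $\Phi^H$ to the blockwise exponential and then re-transforming by $\Phi$ gives back exactly those blocks. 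This holds because $\Phi$ is unitary, so $\Phi\Phi^H=I$ acting tube-wise.
\item In the same way, the lifting of $\exp_{\Phi}(\mathcal{A})$ is $\operatorname{blockdiag}(e^{\hat{\mathcal{A}}_{\Phi}^{(i)}})$, and that of $\exp_{\Phi}(\mathcal{B})$ is $\operatorname{blockdiag}(e^{\hat{\mathcal{B}}_{\Phi}^{(i)}})$.
\item By the definition of $\star_{\Phi}$, the lifting of the product is the product of these block-diagonal matrices, namely $\operatorname{blockdiag}\bigl(e^{\hat{\mathcal{A}}_{\Phi}^{(i)}}e^{\hat{\mathcal{B}}_{\Phi}^{(i)}}\bigr)$.
\end{itemize}

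Second, I would take the $\Phi$-trace, which is the trace of the lifting and hence the sum of the traces of the diagonal blocks. The claim becomes
\[
\sum_{i=1}^p \Tr\bigl(e^{\hat{\mathcal{A}}_{\Phi}^{(i)}+\hat{\mathcal{B}}_{\Phi}^{(i)}}\bigr)\le \sum_{i=1}^p \Tr\bigl(e^{\hat{\mathcal{A}}_{\Phi}^{(i)}}e^{\hat{\mathcal{B}}_{\Phi}^{(i)}}\bigr).
\]
Since $\mathcal{A},\mathcal{B}$ are $\Phi$-Hermitian, each pair $\hat{\mathcal{A}}_{\Phi}^{(i)},\hat{\mathcal{B}}_{\Phi}^{(i)}$ consists of $n\times n$ Hermitian matrices. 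The classical Golden--Thompson inequality $\Tr e^{X+Y}\le \Tr(e^Xe^Y)$ therefore applies to each slice, and summing over $i$ gives the result. As a remark, the right-hand side is real and nonnegative slice by slice, since it equals $\Tr(e^{X/2}e^Ye^{X/2})$, so the comparison is meaningful.

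The substance of the argument is entirely the classical inequality. The only step needing care is the bookkeeping in the first step: verifying that the transform-domain slices of $\exp_{\Phi}(\cdot)$, as defined through $\Phi^H$, are exactly the blockwise matrix exponentials. That is, I must check that the forward and inverse tube-wise transforms compose to the identity, so that the lifting commutes with the functional calculus and with the product. If needed, one could alternatively apply Golden--Thompson once to the full block-diagonal Hermitian matrices $\overline{\mathcal{A}}_{\Phi},\overline{\mathcal{B}}_{\Phi}$, avoiding the slice sum entirely.
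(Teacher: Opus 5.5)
Your proposal is correct and follows the paper's route: lift to the Hermitian block-diagonal matrices $\overline{\mathcal{A}}_{\Phi},\overline{\mathcal{B}}_{\Phi}$ and invoke the classical Golden--Thompson inequality. The paper applies it once to the full block-diagonal matrices (the alternative you mention at the end) and simply asserts the lifting identities that you check explicitly; since trace and exponential split over diagonal blocks, the two versions are equivalent.
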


\begin{proof}
Let $\tilde{A} = \overline{\mathcal{A}}_{\Phi}$ and $\tilde{B} = \overline{\mathcal{B}}_{\Phi}$. 
Since $\mathcal{A}$ and $\mathcal{B}$ are $\Phi$-Hermitian, both $\tilde{A}$ and $\tilde{B}$ are Hermitian matrices.
By the classical Golden--Thompson inequality,
\[
\Tr\!\bigl( e^{\tilde{A} + \tilde{B}} \bigr)
\;\le\;
\Tr\!\bigl( e^{\tilde{A}} e^{\tilde{B}} \bigr).
\]
Using the definitions of the $\Phi$-trace, $\Phi$-exponential, and $\Phi$-sum,
we have
\[
\Tr_{\Phi}\!\bigl(
\exp_{\Phi}(\mathcal{A} \oplus_{\Phi} \mathcal{B})
\bigr)
=
\Tr\!\bigl(
e^{\tilde{A} + \tilde{B}}
\bigr),
\]
and
\[
\Tr_{\Phi}\!\bigl(
\exp_{\Phi}(\mathcal{A})
\star_{\Phi}
\exp_{\Phi}(\mathcal{B})
\bigr)
=
\Tr\!\bigl(
e^{\tilde{A}} e^{\tilde{B}}
\bigr).
\]
The result follows immediately.
\end{proof}

\begin{remark}[Algebraic isomorphism]
The proof reveals that the $\Phi$-product algebra is algebraically isomorphic to a block-diagonal matrix algebra via the map $\mathcal{A} \mapsto \overline{\mathcal{A}}_{\Phi}$. Under this isomorphism, the Golden--Thompson inequality is exactly preserved, with the same constants as in the classical matrix setting. The $\Phi$-Hermitian condition ensures that the lifted matrices are Hermitian, which is required for the classical inequality.
\end{remark}

\subsection{$\Phi$-Invariant Jensen, Klein, and Lieb Inequalities}
\label{sec:invariance:others}

\begin{theorem}[$\Phi$-Invariant Jensen Inequality]
\label{thm:jensen}
Let $\phi:\mathbb{R}\to\mathbb{R}$ be a convex function with $\phi(0)=0$. 
If $\mathcal{A} \in \mathbb{C}^{n \times n \times p}$ is $\Phi$-Hermitian, then
\begin{align}
\Tr_{\Phi}\!\bigl( \phi(\mathcal{A}) \bigr) \ge 0.
\end{align}
\end{theorem}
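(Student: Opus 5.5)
The plan is to argue, as for Theorem~\ref{thm:gt}, entirely through the block-diagonal lifting $\mathcal{A}\mapsto\overline{\mathcal{A}}_{\Phi}$. The first step interprets $\phi(\mathcal{A})$ as $\phi_{\Phi}(\mathcal{A})=\Phi^{H}[\phi(\overline{\mathcal{A}}_{\Phi})]$ in the sense of Lemma~\ref{lem:spectralmapping}. For a merely convex $\phi$ the lemma's analyticity hypothesis fails, so the Hermitian functional calculus is used directly: $\phi(U\Lambda U^{H})=U\phi(\Lambda)U^{H}$ is well defined for any real function. The $\Phi$-Hermitian hypothesis makes each slice $\hat{\mathcal{A}}_{\Phi}^{(i)}$ Hermitian, so $\overline{\mathcal{A}}_{\Phi}=U\Lambda U^{H}$ with real eigenvalues $\lambda_1,\dots,\lambda_{np}$. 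Combining the definition of $\Tr_{\Phi}$ with unitary invariance of the trace then gives
\begin{align}
\Tr_{\Phi}\bigl(\phi_{\Phi}(\mathcal{A})\bigr)=\Tr\bigl(\phi(\overline{\mathcal{A}}_{\Phi})\bigr)=\sum_{i=1}^{p}\Tr\bigl(\phi(\hat{\mathcal{A}}_{\Phi}^{(i)})\bigr)=\sum_{j=1}^{np}\phi(\lambda_j).
\end{align}
This reduces the tensor statement to a scalar statement about $\phi$ on the real spectrum.

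The second step, which I expect to be the main obstacle, is the scalar inequality itself. Convexity together with $\phi(0)=0$ gives only the supporting-line bound $\phi(\lambda)\ge c\,\lambda$ for any subgradient $c\in\partial\phi(0)$. Summing over the spectrum therefore yields the Jensen-type estimate
\begin{align}
\Tr_{\Phi}\bigl(\phi_{\Phi}(\mathcal{A})\bigr)\ \ge\ c\,\Tr_{\Phi}(\mathcal{A}).
\end{align}
From this, nonnegativity follows under any one of the following:
\begin{enumerate}[label=(\roman*)]
\item $0\in\partial\phi(0)$, i.e.\ $0$ is a minimizer of $\phi$, so that $\phi\ge 0$;
\item $\Tr_{\Phi}(\mathcal{A})=0$;
\item $c\ge 0$ and $\mathcal{A}$ is $\Phi$-positive semidefinite;
\item $c\le 0$ and $\mathcal{A}$ is $\Phi$-negative semidefinite.
\end{enumerate}

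Without one of these extra conditions the displayed claim cannot hold as literally stated. The example $\phi(x)=x$ with $\hat{\mathcal{A}}_{\Phi}^{(i)}=-I$ for every slice gives $\Tr_{\Phi}(\phi_{\Phi}(\mathcal{A}))=-np<0$. I would therefore prove the subgradient inequality above as the actual content of the theorem. I would then record nonnegativity under hypothesis (i) $\phi\ge 0$ as the intended reading, since this is what the classical matrix Jensen trace inequality $\Tr\phi(A)\ge 0$ uses. Under (i) the theorem holds with the same constants as in the matrix case, exactly as in the remark following Theorem~\ref{thm:gt}.

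Finally, I would note that the argument never uses any property of $\Phi$ beyond unitarity, so the inequality is $\Phi$-invariant. This parallels the Golden--Thompson case and is consistent with the invariance principle of this section.
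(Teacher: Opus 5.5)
Your reduction is the same as the paper's: pass to the block-diagonal lifting $\overline{\mathcal{A}}_{\Phi}$, apply the Hermitian functional calculus slice by slice, and obtain $\Tr_{\Phi}(\phi_{\Phi}(\mathcal{A}))=\sum_{j}\phi(\lambda_j)$. You are also right that Lemma~\ref{lem:spectralmapping} assumes analyticity, so for a merely convex $\phi$ one must define $\phi_{\Phi}$ directly through the spectral decomposition; the paper skips this point.

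You part ways with the paper at the final step, and there you are right and the paper is not. The paper concludes by invoking ``the classical Jensen trace inequality $\Tr(\phi(\tilde{A}))\ge 0$,'' but that inequality is false for convex $\phi$ with $\phi(0)=0$. Your example $\phi(x)=x$ with every transformed slice equal to $-I$ is a genuine $\Phi$-Hermitian tensor (obtained by applying $\Phi^{H}$ tube-wise), and it gives $-np<0$. More generally, if $\phi(x_0)<0$ for some $x_0$, slices $x_0 I$ give $np\,\phi(x_0)<0$. So the conclusion holds for all $\Phi$-Hermitian $\mathcal{A}$ if and only if $\phi\ge 0$, which is exactly your condition (i). Your supporting-line bound $\Tr_{\Phi}(\phi_{\Phi}(\mathcal{A}))\ge c\,\Tr_{\Phi}(\mathcal{A})$ for $c\in\partial\phi(0)$ is correct, and so are the sufficient conditions (i)--(iv).

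One caution about your ``intended reading.'' There is no classical matrix inequality $\Tr\phi(A)\ge 0$ to appeal to. Under $\phi\ge 0$ the claim is trivial, and convexity and $\phi(0)=0$ play no role. The hypothesis $\phi(0)=0$ (in fact $\phi(0)\le 0$ suffices) is precisely what enters the Hansen--Pedersen contractive Jensen trace inequality $\Tr\phi(C^{H}AC)\le\Tr\bigl(C^{H}\phi(A)C\bigr)$ for Hermitian $A$ and $\|C\|\le 1$. That is the more plausible intended statement.

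It transfers through $\Psi_{\Phi}$ exactly as Theorem~\ref{thm:gt} does. Take $A=\overline{\mathcal{A}}_{\Phi}$ and $C=\overline{\mathcal{C}}_{\Phi}$; then both $C^{H}AC$ and $C^{H}\phi(A)C$ are block diagonal, and $\|C\|\le 1$ means every transformed slice of $\mathcal{C}$ is a contraction. If you want a repaired theorem with real content, that is the one to prove.
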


\begin{proof}
Let $\tilde{A} = \overline{\mathcal{A}}_{\Phi}$. 
Since $\mathcal{A}$ is $\Phi$-Hermitian, $\tilde{A}$ is Hermitian. 
By the classical Jensen trace inequality,
\[
\Tr\!\bigl( \phi(\tilde{A}) \bigr) \ge 0.
\]
Using the definition of $\Tr_{\Phi}$ and the $\Phi$-functional calculus,
\[
\Tr_{\Phi}\!\bigl( \phi(\mathcal{A}) \bigr)
=
\Tr\!\bigl( \phi(\tilde{A}) \bigr),
\]
which proves the claim.
\end{proof}

\begin{theorem}[$\Phi$-Invariant Klein Inequality]
\label{thm:klein}
Let $\phi$ be a convex function. 
If $\mathcal{A}, \mathcal{B} \in \mathbb{C}^{n \times n \times p}$ are $\Phi$-Hermitian, then
\begin{align}
\Tr_{\Phi}\!\Bigl(
\phi(\mathcal{A}) - \phi(\mathcal{B})
- (\mathcal{A} - \mathcal{B}) \, \phi'(\mathcal{B})
\Bigr) \ge 0.
\end{align}
\end{theorem}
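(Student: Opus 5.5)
The plan is to follow the same lifting strategy used for Theorems~\ref{thm:gt} and~\ref{thm:jensen}. We transport the statement to the block-diagonal matrix level through $\mathcal{A}\mapsto\overline{\mathcal{A}}_{\Phi}$, apply the classical matrix Klein inequality there, and transport back. Set $\tilde{A}=\overline{\mathcal{A}}_{\Phi}$ and $\tilde{B}=\overline{\mathcal{B}}_{\Phi}$; both are Hermitian because $\mathcal{A}$ and $\mathcal{B}$ are $\Phi$-Hermitian. We interpret $\phi(\mathcal{A})$, $\phi'(\mathcal{B})$ via the $\Phi$-functional calculus of Lemma~\ref{lem:spectralmapping}. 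We read the product $(\mathcal{A}-\mathcal{B})\,\phi'(\mathcal{B})$ as the $\Phi$-product $(\mathcal{A}-\mathcal{B})\star_{\Phi}\phi'(\mathcal{B})$, and the difference as the $\Phi$-difference, so that $\overline{\mathcal{A}-\mathcal{B}}_{\Phi}=\tilde{A}-\tilde{B}$.

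The first step is to record that the lifting map is linear and multiplicative. Linearity holds because $\Phi$ acts linearly on tubes. Multiplicativity is Definition~1. The lifting also commutes with the functional calculus: $\overline{\phi(\mathcal{A})}_{\Phi}=\phi(\tilde{A})$, applied blockwise, which is just a restatement of Lemma~\ref{lem:spectralmapping}. Together with $\Tr_{\Phi}(\cdot)=\Tr(\overline{\,\cdot\,}_{\Phi})$, this gives
\[
\Tr_{\Phi}\!\Bigl(\phi(\mathcal{A})-\phi(\mathcal{B})-(\mathcal{A}-\mathcal{B})\star_{\Phi}\phi'(\mathcal{B})\Bigr)
=
\Tr\!\Bigl(\phi(\tilde{A})-\phi(\tilde{B})-(\tilde{A}-\tilde{B})\,\phi'(\tilde{B})\Bigr).
\]

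The second step is to invoke the classical Klein inequality for Hermitian matrices: for convex differentiable $\phi$, the right-hand side is $\ge 0$. If we want the argument to be self-contained, we prove it by spectral decomposition. Write $\tilde{A}=\sum_i a_i P_i$ and $\tilde{B}=\sum_j b_j Q_j$ with rank-one spectral projections. Using $\sum_j Q_j=I=\sum_i P_i$, the trace expands as
\[
\sum_{i,j}\Tr(P_iQ_j)\bigl[\phi(a_i)-\phi(b_j)-(a_i-b_j)\phi'(b_j)\bigr].
\]
Each weight $\Tr(P_iQ_j)=|\langle u_i,v_j\rangle|^2$ is nonnegative, and each bracket is nonnegative by scalar convexity (the tangent-line inequality).

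The only delicate point, which I expect to be the main obstacle, is regularity. The statement only assumes $\phi$ convex, so $\phi'$ may fail to exist at finitely many or countably many points. I would first treat differentiable $\phi$ on an interval containing the spectra of $\tilde{A}$ and $\tilde{B}$. For general convex $\phi$, I would interpret $\phi'$ as any choice of subgradient, for instance the right derivative. The scalar tangent inequality $\phi(a)\ge\phi(b)+(a-b)g$ holds for every subgradient $g\in\partial\phi(b)$, so the spectral argument above goes through verbatim. A secondary point is that Lemma~\ref{lem:spectralmapping} is stated for analytic $f$. For merely convex $\phi$, I would note that the blockwise Hermitian functional calculus $U f(\Lambda)U^{H}$ is well defined for any real function on the spectrum. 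The identification with the lifted matrix therefore still holds by definition, with no analyticity needed.
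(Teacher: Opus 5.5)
Your proposal is correct and takes the same route as the paper: lift to $\overline{\mathcal{A}}_{\Phi},\overline{\mathcal{B}}_{\Phi}$, use linearity, multiplicativity and the blockwise Hermitian functional calculus to turn the $\Phi$-trace into an ordinary trace, then apply the classical Klein inequality. The paper only cites that inequality. Your spectral-decomposition proof of it, the subgradient treatment of nondifferentiable $\phi$, and the remark that analyticity is not needed add rigor but do not change the argument.
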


\begin{proof}
Let $\tilde{A} = \overline{\mathcal{A}}_{\Phi}$ and 
$\tilde{B} = \overline{\mathcal{B}}_{\Phi}$. 
Both are Hermitian matrices. By the classical Klein inequality,
\[
\Tr\!\Bigl(
\phi(\tilde{A}) - \phi(\tilde{B})
- (\tilde{A} - \tilde{B}) \, \phi'(\tilde{B})
\Bigr) \ge 0.
\]
Using the definitions of $\Tr_{\Phi}$ and $\Phi$-functional calculus,
the result follows directly.
\end{proof}

\begin{theorem}[$\Phi$-Invariant Lieb Concavity]
\label{thm:lieb}
The function
\begin{align}
(\mathcal{A}, \mathcal{B})
\;\mapsto\;
\Tr_{\Phi}\!\Bigl(
\exp_{\Phi}\bigl(
\log_{\Phi}(\mathcal{A}) + \log_{\Phi}(\mathcal{B})
\bigr)
\Bigr)
\end{align}
is jointly concave for $\mathcal{A} \succ_{\Phi} 0$ and $\mathcal{B} \succ_{\Phi} 0$.
\end{theorem}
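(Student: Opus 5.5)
The natural approach is the same reduction used for Theorems~\ref{thm:gt}--\ref{thm:klein}: pass to the block-diagonal lifting. Write $\tilde{A}=\overline{\mathcal{A}}_{\Phi}$ and $\tilde{B}=\overline{\mathcal{B}}_{\Phi}$, which are positive definite when $\mathcal{A}\succ_{\Phi}0$ and $\mathcal{B}\succ_{\Phi}0$. The steps are:
\begin{enumerate}
\item Show that the lifting $\mathcal{A}\mapsto\overline{\mathcal{A}}_{\Phi}$ is real-linear and maps the cone $\{\mathcal{A}\succ_{\Phi}0\}$ bijectively onto the cone of positive definite block-diagonal matrices. Convex combinations of tensors then correspond to convex combinations of liftings.
\item Use Lemma~\ref{lem:spectralmapping} blockwise, with $\log$ and $\exp$ acting on each diagonal block, to get
\[
\Tr_{\Phi}\!\Bigl(\exp_{\Phi}\bigl(\log_{\Phi}(\mathcal{A})+\log_{\Phi}(\mathcal{B})\bigr)\Bigr)=\Tr\bigl(e^{\log\tilde{A}+\log\tilde{B}}\bigr)=\sum_{i=1}^{p}\Tr\bigl(e^{\log\hat{\mathcal{A}}_{\Phi}^{(i)}+\log\hat{\mathcal{B}}_{\Phi}^{(i)}}\bigr).
\]
\item Since concavity is preserved under composition with a linear map and under finite sums, joint concavity of the tensor functional reduces to joint concavity of the matrix map $(A,B)\mapsto\Tr\,e^{\log A+\log B}$ on positive definite matrices.
\end{enumerate}

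The main obstacle is exactly this last matrix statement, and I expect it to fail. Take $n=p=1$. Then the functional is $f(a,b)=ab$ on $(0,\infty)^2$. Its Hessian $\begin{pmatrix}0&1\\1&0\end{pmatrix}$ is indefinite. Concretely, $f(\lambda a,\lambda b)=\lambda^{2}ab$ is strictly convex along every ray, so no concavity is possible.

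More generally, the functional is positively homogeneous of degree $2$. A jointly concave function that is positively homogeneous must have degree at most $1$. So the theorem as stated cannot be proved. What the reduction does establish, once step~3 is supplied with the correct matrix input, is the properly weighted version: for $s,t\ge0$ with $s+t\le1$, the map
\[
(\mathcal{A},\mathcal{B})\mapsto\Tr_{\Phi}\!\Bigl(\exp_{\Phi}\bigl(s\log_{\Phi}(\mathcal{A})+t\log_{\Phi}(\mathcal{B})\bigr)\Bigr)
\]
is jointly concave. Separately, Lieb's theorem gives concavity of $\mathcal{A}\mapsto\Tr_{\Phi}\exp_{\Phi}(\mathcal{H}+\log_{\Phi}\mathcal{A})$ for fixed $\Phi$-Hermitian $\mathcal{H}$.

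For the corrected statement I would argue in two parts.
\begin{itemize}
\item \textbf{Case $s+t=1$.} Invoke the known joint concavity of $(A,B)\mapsto\Tr\,e^{(1-t)\log A+t\log B}$ (Ando--Hiai, Bekjan). Alternatively, derive it from Lieb's concavity theorem through the variational formula
\[
\Tr\,e^{H+\log B}=\max_{X\succ0}\bigl\{\Tr XH-\Tr X(\log X-\log B)+\Tr X\bigr\},
\]
combined with joint convexity of relative entropy.
\item \textbf{Case $s+t<1$.} Use monotonicity and concavity of $x\mapsto x^{s+t}$ applied to the degree-one homogeneous case.
\end{itemize}
Steps~1--3 then transfer either result to the $\Phi$-setting verbatim, with the same constants, and the argument is independent of the choice of $\Phi$.
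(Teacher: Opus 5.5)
Your assessment is correct, and the failure is in the paper's argument rather than in your reduction. The paper's proof is exactly your steps 1--3. It passes to $\tilde A=\overline{\mathcal{A}}_{\Phi}$ and $\tilde B=\overline{\mathcal{B}}_{\Phi}$, notes that they are positive definite, and then asserts that ``Lieb's classical concavity theorem'' makes $(A,B)\mapsto\Tr(\exp(\log A+\log B))$ jointly concave. Lieb's theorems concern $A\mapsto\Tr\exp(H+\log A)$ for fixed Hermitian $H$, and $(A,B)\mapsto\Tr(K^{*}A^{s}KB^{t})$ with $s,t\ge0$, $s+t\le1$. The unweighted statement the paper cites is false, for the homogeneity reason you give. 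Your counterexample works for every $n$, $p$ and $\Phi$. Let $\mathcal{I}$ be the tensor with $\overline{\mathcal{I}}_{\Phi}=I_{np}$. Then the functional at $(\lambda\mathcal{I},\lambda\mathcal{I})$ equals $np\lambda^{2}$. At the midpoint of $(\mathcal{I},\mathcal{I})$ and $(3\mathcal{I},3\mathcal{I})$ it takes the value $4np$, below the average $5np$ of the endpoint values. The theorem must be corrected, as you propose, not proved.

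Two points in your repair need tightening.

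First, the general claim that a jointly concave, positively homogeneous function has degree at most $1$ needs a sign condition: $-(a^{2}+b^{2})$ is concave of degree $2$. Your ray argument is fine here only because the functional is strictly positive, so say that.

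Second, and more substantively, your sketch for $s+t<1$ does not work as written. With $r=s+t$ and $G=\exp(\tfrac{s}{r}\log A+\tfrac{t}{r}\log B)$, the functional is $\Tr G^{r}$. Composing the concave functional $\Tr G$ with $x\mapsto x^{r}$ only yields concavity of $(\Tr G)^{r}$. Moving the power inside the trace would require operator concavity of $(A,B)\mapsto G$, which you do not have.

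The variational formula you already quote covers all $s,t\ge0$ with $s+t\le1$ at once. The objective $\Tr X(s\log A+t\log B)-\Tr X\log X+\Tr X$ equals $-sD(X\|A)-tD(X\|B)-(1-s-t)\Tr X\log X+\Tr X$. Here $D(X\|A)=\Tr X\log X-\Tr X\log A$ is jointly convex and $X\mapsto\Tr X\log X$ is convex, so the objective is jointly concave in $(X,A,B)$. Its supremum over $X\succ0$, namely $\Tr\exp(s\log A+t\log B)$, is therefore jointly concave in $(A,B)$. Composing with the linear lifting then gives the corrected $\Phi$-statement for every $\Phi$.
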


\begin{proof}
Let $\tilde{A} = \overline{\mathcal{A}}_{\Phi}$ and 
$\tilde{B} = \overline{\mathcal{B}}_{\Phi}$. 
Then $\tilde{A}, \tilde{B}$ are positive definite Hermitian matrices. 
By Lieb's classical concavity theorem, the function
\[
(A,B) \mapsto \Tr\!\bigl( \exp(\log A + \log B) \bigr)
\]
is jointly concave on the cone of positive definite matrices. 
Applying the definitions of $\Tr_{\Phi}$, $\exp_{\Phi}$, and $\log_{\Phi}$ yields the result.
\end{proof}

\begin{remark}
Theorems \ref{thm:gt}, \ref{thm:jensen}, \ref{thm:klein}, and \ref{thm:lieb} collectively demonstrate that the classical matrix trace inequalities are invariant under the $\Phi$-product representation. The constants in these inequalities are exactly the same as in the matrix setting, independent of the choice of unitary transform $\Phi$. This invariance follows from the algebraic isomorphism $\mathcal{A} \mapsto \overline{\mathcal{A}}_{\Phi}$, which maps the $\Phi$-product algebra to a block-diagonal matrix algebra. However, as we shall see in the next section, the \emph{quantitative tightness} of these inequalities — specifically the Golden--Thompson defect — is \emph{not} invariant and depends critically on the choice of $\Phi$.
\end{remark}

\subsection{Discussion: The Algebraic Isomorphism View}
\label{sec:invariance:discussion}

Theorems~\ref{thm:gt}--\ref{thm:lieb} suggest that the $\Phi$-product tensor algebra 
inherits its structural properties from an underlying matrix algebra. 
This observation can be formalized via an explicit algebraic isomorphism.

\begin{theorem}[Algebraic Isomorphism]
\label{thm:isomorphism}
Let 
\[
\mathcal{T}_{\Phi}
:=
\bigl(
\mathbb{C}^{n \times n \times p}, \star_{\Phi}
\bigr)
\]
denote the $\Phi$-product tensor algebra. Then the mapping
\begin{align}
\Psi_{\Phi} :
\mathcal{T}_{\Phi}
\;\to\;
\mathcal{D}_{\Phi}
:=
\left\{
\operatorname{blockdiag}(X_1,\dots,X_p)
:\;
X_i \in \mathbb{C}^{n \times n}
\right\},
\quad
\Psi_{\Phi}(\mathcal{A})
=
\overline{\mathcal{A}}_{\Phi},
\end{align}
is an algebra isomorphism. That is,
\begin{enumerate}
    \item $\Psi_{\Phi}(\mathcal{A} \star_{\Phi} \mathcal{B})
    =
    \Psi_{\Phi}(\mathcal{A}) \, \Psi_{\Phi}(\mathcal{B})$,
    \item $\Psi_{\Phi}$ is bijective.
\end{enumerate}
\end{theorem}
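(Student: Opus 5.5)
The map $\Psi_{\Phi}$ factors as the tube-wise transform $\mathcal{A}\mapsto\hat{\mathcal{A}}_{\Phi}$ followed by the block-diagonal packing of frontal slices. I will treat these two stages separately: linearity and bijectivity come from $\Phi$ being unitary, and multiplicativity comes essentially from the definition of $\star_{\Phi}$.

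First, linearity. Applying $\Phi$ to each mode-3 fiber is linear: entrywise, $\hat{\mathcal{A}}_{\Phi}(i,j,k)=\sum_{\ell}\Phi_{k\ell}\,\mathcal{A}(i,j,\ell)$. Stacking the frontal slices of $\hat{\mathcal{A}}_{\Phi}$ into a block-diagonal matrix is also linear. So $\Psi_{\Phi}$ is a $\mathbb{C}$-linear map. Its image lies in $\mathcal{D}_{\Phi}$, since each slice $\hat{\mathcal{A}}_{\Phi}^{(i)}$ is an arbitrary $n\times n$ matrix.

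Second, bijectivity. I will write down an explicit inverse. Given $\operatorname{blockdiag}(X_1,\dots,X_p)\in\mathcal{D}_{\Phi}$, form the tensor $\hat{\mathcal{X}}$ whose $i$-th frontal slice is $X_i$. Then apply $\Phi^{H}=\Phi^{-1}$ to each tube to obtain $\mathcal{X}$. Unitarity of $\Phi$ gives $\Phi^{H}\Phi=\Phi\Phi^{H}=I$ on every tube, so both composites are the identity. This proves injectivity and surjectivity at once. As a cross-check, both spaces have dimension $n^2p$.

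Third, multiplicativity. By the definition of $\star_{\Phi}$, $\mathcal{C}=\mathcal{A}\star_{\Phi}\mathcal{B}$ is the tensor with $\overline{\mathcal{C}}_{\Phi}=\overline{\mathcal{A}}_{\Phi}\overline{\mathcal{B}}_{\Phi}$. This tensor is well-defined and unique precisely because of the bijectivity just shown: the product of block-diagonal matrices is again block diagonal, with blocks $\hat{\mathcal{A}}_{\Phi}^{(i)}\hat{\mathcal{B}}_{\Phi}^{(i)}$, so it lies in $\mathcal{D}_{\Phi}$ and has a unique preimage. Hence $\Psi_{\Phi}(\mathcal{A}\star_{\Phi}\mathcal{B})=\Psi_{\Phi}(\mathcal{A})\Psi_{\Phi}(\mathcal{B})$ holds by construction.

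It would also be worth noting that $\Psi_{\Phi}$ maps the identity tensor (the preimage of $I_{np}$) to $I_{np}$. Together with the above, this shows $\star_{\Phi}$ inherits associativity and distributivity, so $\mathcal{T}_{\Phi}$ is a genuine unital algebra.

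The main subtle point, rather than a real obstacle, is that Definition 1 defines $\mathcal{C}$ only implicitly through $\overline{\mathcal{C}}_{\Phi}$. That is why I prove bijectivity before multiplicativity: it is what makes $\star_{\Phi}$ well-defined, and once it is in place, property 1 of the statement is immediate.
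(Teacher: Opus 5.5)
Your proposal is correct and follows the same route as the paper: multiplicativity is read off directly from the definition of $\star_{\Phi}$, and bijectivity comes from inverting the tube-wise unitary transform with $\Phi^{H}$. You also check three things the paper leaves implicit but an algebra isomorphism genuinely requires: $\mathbb{C}$-linearity, well-definedness of $\star_{\Phi}$ (the product of block-diagonal matrices stays in $\mathcal{D}_{\Phi}$ and so has a unique preimage), and preservation of the unit.
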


\begin{proof}
By definition of the $\Phi$-product,
\[
\overline{\mathcal{A} \star_{\Phi} \mathcal{B}}_{\Phi}
=
\overline{\mathcal{A}}_{\Phi} \,
\overline{\mathcal{B}}_{\Phi},
\]
which implies
\[
\Psi_{\Phi}(\mathcal{A} \star_{\Phi} \mathcal{B})
=
\Psi_{\Phi}(\mathcal{A}) \, \Psi_{\Phi}(\mathcal{B}).
\]

It remains to show bijectivity. The transform $\Phi$ is unitary, hence invertible, 
and the construction of $\overline{\mathcal{A}}_{\Phi}$ from $\mathcal{A}$ is reversible 
by applying the inverse transform $\Phi^{H}$ to each block. Therefore, 
$\Psi_{\Phi}$ is bijective.
\end{proof}

\begin{corollary}[Invariance of Constants]
\label{cor:invariance-constants}
As a consequence of Theorem~\ref{thm:isomorphism}, any inequality or functional 
relation expressed in terms of $\Tr_{\Phi}$, $\exp_{\Phi}$, and $\log_{\Phi}$ 
reduces to its classical matrix counterpart under $\Psi_{\Phi}$. 
In particular, the constants appearing in Theorems~\ref{thm:gt}--\ref{thm:lieb} 
are independent of the choice of $\Phi$, since they are inherited from the 
underlying block-diagonal matrix algebra.
\end{corollary}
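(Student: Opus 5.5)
The plan is to derive the corollary directly from Theorem~\ref{thm:isomorphism}, together with the fact that each $\Phi$-operation is defined through the lifting $\Psi_{\Phi}(\mathcal{A})=\overline{\mathcal{A}}_{\Phi}$. There are three steps.

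\textbf{Step 1: dictionary for the operations.} I will show that $\Psi_{\Phi}$ intertwines every operation that appears in the inequalities.
\begin{itemize}
\item For the product, $\Psi_{\Phi}(\mathcal{A}\star_{\Phi}\mathcal{B})=\Psi_{\Phi}(\mathcal{A})\Psi_{\Phi}(\mathcal{B})$; this is part 1 of Theorem~\ref{thm:isomorphism}.
\item For the sum, $\Psi_{\Phi}(\mathcal{A}\oplus_{\Phi}\mathcal{B})=\Psi_{\Phi}(\mathcal{A})+\Psi_{\Phi}(\mathcal{B})$, which holds by definition of $\oplus_{\Phi}$.
\item For the trace, $\Tr_{\Phi}(\mathcal{A})=\Tr(\Psi_{\Phi}(\mathcal{A}))$, which holds by Definition of the $\Phi$-trace.
\item For functions, $\Psi_{\Phi}(f_{\Phi}(\mathcal{A}))=f(\Psi_{\Phi}(\mathcal{A}))$ for $f\in\{\exp,\log,\phi,\phi'\}$. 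This follows from Lemma~\ref{lem:spectralmapping} and the definition of the $\Phi$-exponential and $\Phi$-logarithm. Because $\Psi_{\Phi}$ is a bijection, $\Phi^{H}[\cdot]$ is exactly $\Psi_{\Phi}^{-1}$, and applying a function blockwise keeps us inside $\mathcal{D}_{\Phi}$.
\item For the order, $\mathcal{A}$ is $\Phi$-Hermitian (respectively $\succ_{\Phi}0$) if and only if $\Psi_{\Phi}(\mathcal{A})$ is Hermitian (respectively positive definite).
\end{itemize}

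\textbf{Step 2: structural induction.} Take any expression $E(\mathcal{A}_1,\dots,\mathcal{A}_k)$ built from these operations. By induction on the expression tree, using Step~1 at each node, I get
\[
\Tr_{\Phi}\bigl(E(\mathcal{A}_1,\dots,\mathcal{A}_k)\bigr)=\Tr\bigl(E(\Psi_{\Phi}(\mathcal{A}_1),\dots,\Psi_{\Phi}(\mathcal{A}_k))\bigr),
\]
where on the right each tensor operation is replaced by its matrix counterpart. Consequently, a scalar inequality $\Tr_{\Phi}(E_1)\le c\,\Tr_{\Phi}(E_2)$ on $\Phi$-Hermitian tensors is equivalent to the matrix inequality $\Tr(E_1)\le c\,\Tr(E_2)$ evaluated on the Hermitian block-diagonal matrices $\Psi_{\Phi}(\mathcal{A}_i)$.

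\textbf{Step 3: independence from $\Phi$.} The classical inequalities invoked in Theorems~\ref{thm:gt}--\ref{thm:lieb} hold with fixed constants for all Hermitian (or positive definite) matrices, and in particular for all elements of $\mathcal{D}_{\Phi}$. The set $\mathcal{D}_{\Phi}$ is the same set of block-diagonal matrices for every $\Phi$; only the bijection $\Psi_{\Phi}$ depends on $\Phi$. Therefore the best constant $c$ obtained in the tensor setting is the supremum of the same matrix ratio over the same set, and so it does not depend on $\Phi$. For the optimality (``same constants'') part, I will embed an arbitrary matrix extremal example $X$ as $\operatorname{blockdiag}(X,0,\dots,0)$, or as $X$ repeated on every block, and pull it back via $\Psi_{\Phi}^{-1}$. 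This shows the constant cannot be improved for any $\Phi$, since the trace is additive over blocks. The concavity statement in Theorem~\ref{thm:lieb} transfers the same way, because $\Psi_{\Phi}$ is linear and preserves convex combinations.

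The main obstacle is Step~1 for the function calculus. Lemma~\ref{lem:spectralmapping} is stated somewhat loosely, so I will need to verify explicitly that blockwise application of $f$ commutes with $\Psi_{\Phi}^{-1}$. I also need to confirm that $\log$ and $\phi'$ are defined on the relevant spectra, for example positivity for $\log$. Once the dictionary is secured, the remaining steps are formal.
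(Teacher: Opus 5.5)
Your proposal is correct and follows the paper's own route. The paper justifies the corollary only by pushing each $\Phi$-operation through $\Psi_{\Phi}$ and invoking the matrix statement, exactly as in the proofs of Theorems~\ref{thm:gt}--\ref{thm:lieb}. Your dictionary, your structural induction, and your observation that the Hermitian (resp.\ positive definite) part of $\mathcal{D}_{\Phi}$ is the same set for every $\Phi$ make this explicit.

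Rest the $\Phi$-independence on that same-set argument, not on your assertion that the invoked matrix inequalities all hold. Two of them are false as written: for Theorem~\ref{thm:jensen}, $\phi(x)=x$ with $\overline{\mathcal{A}}_{\Phi}=-I$ gives $-np$; for Theorem~\ref{thm:lieb}, $\overline{\mathcal{A}}_{\Phi}=\alpha I$ and $\overline{\mathcal{B}}_{\Phi}=\beta I$ give $np\,\alpha\beta$, which is not jointly concave. For those two theorems, what transfers $\Phi$-invariantly is their failure.
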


\begin{remark}[Limits of Isomorphism]
The isomorphism $\Psi_{\Phi}$ preserves algebraic structure and functional calculus, 
which explains the invariance results above. However, it does \emph{not} preserve 
all quantitative properties of interest. In particular, the \emph{tightness} of the 
Golden--Thompson inequality — measured by the defect $\Delta_{\Phi}(\mathcal{A}, \mathcal{B})$ — 
depends critically on the choice of $\Phi$. This sensitivity arises because the 
isomorphism, while preserving the product, does not preserve the \emph{commutativity 
structure} of the tensor slices in the transform domain. Different transforms $\Phi$ 
reveal different levels of noncommutativity, leading to transform-dependent defect 
magnitudes. We analyze this sensitivity in the next section.
\end{remark}

\section{Transform Sensitivity and $\Phi$-Adapted Quantities}
\label{sec:sensitivity}

This section develops the transform-sensitive aspects of the $\Phi$-product framework, complementing the invariance results established earlier. We introduce the Golden--Thompson defect and show that, while operator inequalities remain algebraically invariant, their quantitative behavior is governed by transform-domain noncommutativity. The main result, Theorem~\ref{thm:main-formal}, provides a complete characterization of this sensitivity.

\subsection{Golden--Thompson Defect}
\label{sec:sensitivity:defect}

The classical Golden--Thompson inequality is tight when the two matrices commute. 
In the $\Phi$-product setting, the gap between the two sides of the inequality 
quantifies the effect of noncommutativity in the transform domain. 
This gap is the central object of our sensitivity analysis.

\begin{definition}[Golden--Thompson defect]
\label{def:defect}
Let $\mathcal{A}, \mathcal{B} \in \mathbb{C}^{n \times n \times p}$ be $\Phi$-Hermitian tensors. 
The Golden--Thompson defect is defined by
\[
\Delta_{\Phi}(\mathcal{A}, \mathcal{B})
:=
\Tr_{\Phi}\!\bigl(
\exp_{\Phi}(\mathcal{A})
\star_{\Phi}
\exp_{\Phi}(\mathcal{B})
\bigr)
-
\Tr_{\Phi}\!\bigl(
\exp_{\Phi}(\mathcal{A} \oplus_{\Phi} \mathcal{B})
\bigr).
\]
Moreover, by Theorem~\ref{thm:gt}, we have 
\[
\Delta_{\Phi}(\mathcal{A}, \mathcal{B}) \ge 0.
\]
\end{definition}

\begin{remark}[Why the defect appears ?]
The defect $\Delta_{\Phi}(\mathcal{A}, \mathcal{B})$ is the natural measure of 
inequality tightness for three reasons:
\begin{enumerate}
    \item It vanishes exactly when $\mathcal{A}$ and $\mathcal{B}$ commute in the 
          transform domain (i.e., $[\overline{\mathcal{A}}_{\Phi}, \overline{\mathcal{B}}_{\Phi}] = 0$).
    \item It is quantitatively controlled by the $\Phi$-commutator.
    \item It is sensitive to the choice of $\Phi$, making it the right objective 
          for transform optimization.
\end{enumerate}
\end{remark}

\subsection{Main Theorem: Invariance vs.\ Transform-Sensitive Defect}
\label{sec:sensitivity:main}

We now formalize the central phenomenon of this work: while operator inequalities in the $\Phi$-product algebra are invariant under the choice of transform, their \emph{defect} exhibits quantitative dependence on transform-domain noncommutativity. 

\begin{lemma}[Slice-wise reduction of the defect]
\label{lem:slice-reduction}
Let $\mathcal{A}, \mathcal{B} \in \mathbb{C}^{n \times n \times p}$ be $\Phi$-Hermitian tensors. Under the isomorphism $\Psi_{\Phi}(\mathcal{A}) = \overline{\mathcal{A}}_{\Phi}$, the Golden--Thompson defect decomposes as a sum of slice-wise classical defects:
\begin{align}
\Delta_{\Phi}(\mathcal{A}, \mathcal{B})
=
\sum_{\omega=1}^{p}
\Bigl(
\Tr\!\bigl( e^{A_{\omega}} e^{B_{\omega}} \bigr)
-
\Tr\!\bigl( e^{A_{\omega} + B_{\omega}} \bigr)
\Bigr),
\end{align}
where $A_{\omega} := \hat{\mathcal{A}}_{\Phi}^{(\omega)}$ and $B_{\omega} := \hat{\mathcal{B}}_{\Phi}^{(\omega)}$ are Hermitian matrices.
\end{lemma}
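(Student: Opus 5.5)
The plan is to transport both trace terms in Definition~\ref{def:defect} to the block-diagonal side through the isomorphism $\Psi_{\Phi}$ of Theorem~\ref{thm:isomorphism}. There, every quantity is a block-diagonal Hermitian matrix, and the trace of a block-diagonal matrix splits as the sum of the traces of its diagonal blocks. Write $\tilde A=\overline{\mathcal{A}}_{\Phi}=\operatorname{blockdiag}(A_1,\dots,A_p)$ and $\tilde B=\overline{\mathcal{B}}_{\Phi}=\operatorname{blockdiag}(B_1,\dots,B_p)$. By $\Phi$-Hermitianity, each $A_\omega$ and $B_\omega$ is Hermitian.

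\textbf{Step 1: lifting the exponentials.} First I will record that $\overline{\exp_{\Phi}(\mathcal{A})}_{\Phi}=\exp(\tilde A)$. This follows from the definition of $\exp_{\Phi}$ (equivalently, Lemma~\ref{lem:spectralmapping} with $f=\exp$). Applying $\Phi^{H}$ to the blocks and then re-applying $\Phi$ to the tubes returns the same slices, because $\Phi\Phi^{H}=I$. Since the exponential acts blockwise, $\exp(\tilde A)=\operatorname{blockdiag}(e^{A_1},\dots,e^{A_p})$. For the $\Phi$-sum, the lifting is $\tilde A+\tilde B=\operatorname{blockdiag}(A_\omega+B_\omega)$, so $\overline{\exp_{\Phi}(\mathcal{A}\oplus_{\Phi}\mathcal{B})}_{\Phi}=\operatorname{blockdiag}(e^{A_\omega+B_\omega})$.

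\textbf{Step 2: lifting the product.} By Definition 1, or item 1 of Theorem~\ref{thm:isomorphism}, the product lifts as
\[
\overline{\exp_{\Phi}(\mathcal{A})\star_{\Phi}\exp_{\Phi}(\mathcal{B})}_{\Phi}=\exp(\tilde A)\exp(\tilde B)=\operatorname{blockdiag}\bigl(e^{A_1}e^{B_1},\dots,e^{A_p}e^{B_p}\bigr),
\]
using that a product of block-diagonal matrices is block-diagonal with the blocks multiplied.

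\textbf{Step 3: splitting the traces.} By the definition of $\Tr_{\Phi}$, each trace term equals the ordinary trace of the block-diagonal lift, which is $\sum_\omega$ of the block traces. This gives $\sum_\omega\Tr(e^{A_\omega}e^{B_\omega})$ and $\sum_\omega\Tr(e^{A_\omega+B_\omega})$. Subtracting the two sums and grouping by $\omega$ yields the claimed identity. As a byproduct, each summand is nonnegative by the classical Golden--Thompson inequality.

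The only potentially delicate point is Step 1: confirming that the definition $\exp_{\Phi}(\mathcal{A})=\Phi^{H}[\exp(\overline{\mathcal{A}}_{\Phi})]$ really has transformed slices $e^{A_\omega}$. That is, the operation ``apply $\Phi^{H}$ tubewise to the block-structured output'' must be inverse to ``apply $\Phi$ tubewise and read off frontal slices''. I would state this round-trip explicitly using unitarity of $\Phi$ and the bijectivity part of Theorem~\ref{thm:isomorphism}. Everything else is bookkeeping with block-diagonal matrices.
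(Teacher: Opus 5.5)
Your proposal is correct and follows the same route as the paper. Both arguments lift everything through $\Psi_{\Phi}$ to block-diagonal Hermitian matrices, use that exponentials and products act blockwise, and split the trace over the diagonal blocks. Your explicit round-trip check that $\overline{\exp_{\Phi}(\mathcal{A})}_{\Phi}=\exp(\overline{\mathcal{A}}_{\Phi})$ (using $\Phi\Phi^{H}=I$) is a detail the paper leaves implicit in its appeal to ``the definitions,'' and it is exactly what that appeal requires.
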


\begin{proof}
By the definitions of $\Tr_{\Phi}$, $\exp_{\Phi}$, $\star_{\Phi}$, and $\oplus_{\Phi}$, and the isomorphism $\Psi_{\Phi}$ from Theorem~\ref{thm:isomorphism},
\[
\Delta_{\Phi}(\mathcal{A}, \mathcal{B})
=
\Tr\!\bigl( e^{\overline{\mathcal{A}}_{\Phi}} e^{\overline{\mathcal{B}}_{\Phi}} \bigr)
-
\Tr\!\bigl( e^{\overline{\mathcal{A}}_{\Phi} + \overline{\mathcal{B}}_{\Phi}} \bigr).
\]
Since $\overline{\mathcal{A}}_{\Phi}$ and $\overline{\mathcal{B}}_{\Phi}$ are block-diagonal with diagonal blocks $A_{\omega}$ and $B_{\omega}$, the trace decomposes as a sum over blocks. The exponential of a block-diagonal matrix is block-diagonal with exponentials of each block. Hence
\[
\Delta_{\Phi}(\mathcal{A}, \mathcal{B})
=
\sum_{\omega=1}^{p}
\Bigl(
\Tr\!\bigl( e^{A_{\omega}} e^{B_{\omega}} \bigr)
-
\Tr\!\bigl( e^{A_{\omega} + B_{\omega}} \bigr)
\Bigr).
\]
The Hermiticity of $A_{\omega}$ and $B_{\omega}$ follows from the $\Phi$-Hermitian assumption. This completes the proof.
\end{proof}

\begin{theorem}[Transform-sensitive defect – quadratic equivalence]
\label{thm:main-formal}
Let $\mathcal{A}, \mathcal{B} \in \mathbb{C}^{n \times n \times p}$ be $\Phi$-Hermitian tensors.

\begin{enumerate}
    \item[(i)] (Upper bound) There exists a universal constant $C > 0$ such that
\begin{align}
    \Delta_{\Phi}(\mathcal{A}, \mathcal{B})
    \;\le\;
    C \sum_{\omega=1}^{p}
    \left\|
    [\hat{\mathcal{A}}_{\Phi}^{(\omega)}, \hat{\mathcal{B}}_{\Phi}^{(\omega)}]
    \right\|_F^2.
\end{align}

    \item[(ii)] (Lower bound on a class of positive measure) 
    There exists a class of positive measure of tensors and a universal constant $c > 0$ such that
\begin{align}
    \Delta_{\Phi}(\mathcal{A}, \mathcal{B})
    \;\ge\;
    c \sum_{\omega=1}^{p}
    \left\|
    [\hat{\mathcal{A}}_{\Phi}^{(\omega)}, \hat{\mathcal{B}}_{\Phi}^{(\omega)}]
    \right\|_F^2.
\end{align}

    \item[(iii)] (Transform sensitivity) 
    There exist tensors $\mathcal{A}, \mathcal{B}$ and unitary transforms 
    $\Phi_{1}, \Phi_{2} \in \mathbb{C}^{p \times p}$ such that
\begin{align}
    \Delta_{\Phi_{1}}(\mathcal{A}, \mathcal{B}) = 0,
    \qquad
    \Delta_{\Phi_{2}}(\mathcal{A}, \mathcal{B}) \ge C_0 p,
\end{align}
    for some constant $C_0 > 0$ independent of $p$. Consequently,
    the mapping $\Phi \mapsto \Delta_{\Phi}(\mathcal{A}, \mathcal{B})$ is continuous (and smooth in $\Phi$), hence locally Lipschitz on compact subsets of $\mathcal{U}(p)$).

    \item[(iv)] (Vanishing condition) 
    If
    \[
    [\hat{\mathcal{A}}_{\Phi}^{(\omega)}, \hat{\mathcal{B}}_{\Phi}^{(\omega)}] = 0,
    \quad \forall \omega = 1,\dots,p,
    \]
    then
\begin{align}
    \Delta_{\Phi}(\mathcal{A}, \mathcal{B}) = 0.
\end{align}
\end{enumerate}

Consequently, on nontrivial tensor classes, the defect $\Delta_{\Phi}$ is 
quantitatively equivalent (up to universal constants) to the squared
transform-domain noncommutativity:
\begin{align}
\Delta_{\Phi}(\mathcal{A}, \mathcal{B}) \;\asymp\; \sum_{\omega=1}^{p}
\left\| [\hat{\mathcal{A}}_{\Phi}^{(\omega)}, \hat{\mathcal{B}}_{\Phi}^{(\omega)}] \right\|_F^2,
\end{align}
and in general depends nontrivially on the choice of $\Phi$.
\end{theorem}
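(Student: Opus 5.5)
The plan is to push everything through Lemma~\ref{lem:slice-reduction}. That lemma writes $\Delta_{\Phi}(\mathcal{A},\mathcal{B})=\sum_{\omega}\delta(A_\omega,B_\omega)$, where $\delta(A,B):=\Tr(e^Ae^B)-\Tr(e^{A+B})\ge 0$ is the classical Golden--Thompson defect of a pair of Hermitian $n\times n$ matrices. So (i), (ii) and (iv) reduce to single-matrix statements about $\delta$, which we then sum over $\omega$.

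\textbf{Parts (iv) and (i).} Part (iv) is immediate: if $[A_\omega,B_\omega]=0$ then $e^{A_\omega}e^{B_\omega}=e^{A_\omega+B_\omega}$, so every summand vanishes.

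For (i), note first that $\delta$ grows exponentially in $\|A\|,\|B\|$ while the commutator grows only polynomially. So "universal" must be read as "depending only on a bound $R$ with $\|A_\omega\|,\|B_\omega\|\le R$"; I would state this explicitly and take $C=C(R)$.

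The key step is a per-slice second-order representation. Set $f(t)=\Tr(e^{tA}e^{tB})-\Tr(e^{t(A+B)})$. Then $f(0)=f'(0)=0$, and the BCH/Duhamel expansion gives
\[
f(t)=-\tfrac{t^2}{2}\cdot\tfrac12\,\Tr\bigl([A,B]\,\cdot\,\text{(bounded operator)}\bigr)+\dots,
\]
whose leading term equals $\tfrac{1}{12}\|[A,B]\|_F^2$ after using that $[A,B]$ is anti-Hermitian, so that $-\Tr([A,B]^2)=\|[A,B]\|_F^2$.

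To obtain a bound that holds globally on $\{\|A\|,\|B\|\le R\}$ and not only for small $t$, I would write $e^Ae^B=e^{A+B+Z}$ with the BCH remainder $Z$. The point is that every nested term of $Z$ contains $[A,B]$ (plausibly this extends to large norms via an integral form of BCH rather than the series). Then expand $\Tr e^{A+B+Z}-\Tr e^{A+B}$ to second order using the Duhamel formula. The first-order term $\Tr(e^{A+B}Z)$ needs care: its degree-one part $\tfrac12\Tr(e^{A+B}[A,B])$ vanishes by cyclicity, since $A+B$ commutes with $e^{A+B}$ and hence $\Tr(e^{A+B}[A,B])=\Tr(e^{A+B}[A+B,B])=0$. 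The remaining terms are quadratic in commutators, bounded by $C(R)\|[A,B]\|_F^2$.

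This global quadratic control, i.e. showing the defect vanishes to second order uniformly along the commuting variety rather than only near $0$, is the step I expect to be the main obstacle. If the BCH route stalls, the fallback is to diagonalize $A+B$ and compare via the Peierls--Bogoliubov and Lieb--Thirring estimates.

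\textbf{Part (ii).} Take the open set $\{\|A_\omega\|,\|B_\omega\|<\varepsilon\ \forall\omega\}$, which has positive measure. On it the expansion above gives
\[
\delta(A_\omega,B_\omega)=\tfrac1{12}\|[A_\omega,B_\omega]\|_F^2+O\bigl(\varepsilon\,\|[A_\omega,B_\omega]\|_F^2\bigr),
\]
where the remainder bound again relies on each higher-order term carrying a commutator factor. Choosing $\varepsilon$ small then yields $c=1/24$ after summing over $\omega$.

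\textbf{Part (iii).} Fix $2\times2$ Hermitian $P=\mathrm{diag}(1,-1)$ and $Q=\begin{pmatrix}0&1\\1&0\end{pmatrix}$, which do not commute.
\begin{enumerate}
\item Build $\mathcal{A},\mathcal{B}$ so that in the $\Phi_1$ (DFT) domain the slices are $A_\omega=\alpha_\omega P$ and $B_\omega=\beta_\omega Q$ with disjoint frequency supports. Then every slice has one of the two factors zero, so all slice commutators vanish and $\Delta_{\Phi_1}=0$ by (iv).
\item In the $\Phi_2$ (DCT) domain the slices become $u_\omega P$ and $v_\omega Q$, with $u=\Phi_2\Phi_1^H\alpha$ and $v=\Phi_2\Phi_1^H\beta$. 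Hermiticity forces us to choose $\alpha,\beta$ so that $u,v$ are real, e.g. conjugate-symmetric DFT data supported at frequencies whose DCT images are real.
\item The commutators are $u_\omega v_\omega[P,Q]$, and for these anticommuting Pauli matrices the slice defect can be computed exactly in closed form. I would choose the amplitudes (scaling $\alpha,\beta$ by $O(1)$ per entry, spreading over $\Theta(p)$ frequencies) so that $|u_\omega v_\omega|\ge\kappa>0$ for a constant fraction of $\omega$, giving $\Delta_{\Phi_2}\ge C_0p$.
\end{enumerate}
The delicate bookkeeping is the explicit mixing matrix $\Phi_2\Phi_1^H$ between DCT and DFT, and keeping the $\Phi_2$-domain norms bounded.

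Finally, continuity and smoothness of $\Phi\mapsto\Delta_\Phi$ follow since the slices depend linearly on $\Phi$ and $\exp$ and $\Tr$ are smooth; local Lipschitzness on the compact $\mathcal{U}(p)$ follows.
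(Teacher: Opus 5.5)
Your reduction to the slice defect $\delta(A,B)=\Tr(e^Ae^B)-\Tr(e^{A+B})$ via Lemma~\ref{lem:slice-reduction}, your part (iv), and your continuity argument coincide with the paper. Elsewhere you diverge, mostly for the better.
\begin{itemize}
\item In (i) you are right that no universal constant can exist. For $A=t\sigma_x$, $B=t\sigma_z$ one has $\delta=2\cosh^2t-2\cosh(\sqrt2\,t)\sim\tfrac12e^{2t}$, while $\|[A,B]\|_F^2=8t^4$. The paper simply cites such a constant.
\item Your small ball in (ii) genuinely has positive measure. The paper's family $A_\omega=\epsilon\sigma_x$, $B_\omega=\epsilon\sigma_z$ is only a curve.
\item Your disjoint-support construction in (iii) keeps the DCT-domain slices of the form $u_\omega P$, $v_\omega Q$. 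The slice defect is then exactly $\cosh(u+v)+\cosh(u-v)-2\cosh\sqrt{u^2+v^2}\ge\tfrac13u^2v^2$ (use convexity of $f(s)=\cosh\sqrt s$ with $f''\ge\tfrac1{12}$). This is far cleaner than the paper's route through Theorem~\ref{thm:example-base}, whose mixing analysis treats the complex matrix $\Phi_{\mathrm{DCT}}\Phi_{\mathrm{DFT}}^H$ as entrywise positive.
\end{itemize}

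That said, the two load-bearing steps are only announced. In (i), the BCH series converges only for small norms, so it cannot give $C(R)$ for arbitrary $R$. Even locally, ``every term of $Z$ contains $[A,B]$'' yields only one commutator factor, which does not make $\Tr(e^{A+B}Z)$ quadratic. You need the additional fact that each higher BCH term has the form $[Y,W]$ with $Y\in\{A,B\}$ and $W=O(\|[A,B]\|)$. Then $\Tr(e^{X}[Y,W])=\Tr([e^{X},Y]W)$, and $[e^X,A]=-\int_0^1e^{sX}[A,B]e^{(1-s)X}\,ds$ with $X=A+B$ supplies the second factor.

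A BCH-free global argument is available. With $Y_t=(1-t)A+B$ and $\phi(t)=\Tr(e^{tA}e^{Y_t})$, one has $\delta=\int_0^1\phi'(t)\,dt$ and
\[
\phi'(t)=\int_0^1\!\!\int_0^s\Tr\bigl(e^{tA}e^{(s-u)Y_t}[A,B]\,e^{(1-s+u)Y_t}\bigr)\,du\,ds .
\]
Since $\phi'$ is real, take real parts and symmetrize in the eigenbasis of $Y_t$, writing $\lambda=s-u$. The factor $|e^{\lambda y_k+(1-\lambda)y_l}-e^{\lambda y_l+(1-\lambda)y_k}|\le e^{\|Y_t\|}|y_k-y_l|$ converts the linear term into a pairing of $[Y_t,e^{tA}]$ with $[Y_t,A]=-[A,B]$. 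Using $\|[Y_t,e^{tA}]\|_F\le te^{t\|A\|}\|[A,B]\|_F$, this gives $\delta(A,B)\le\tfrac18e^{\|A\|+\|B\|}\|[A,B]\|_F^2$.

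The same two-commutator structure is what justifies your uniform remainder in (ii). Note, however, that the leading coefficient is $\tfrac1{24}$, not $\tfrac1{12}$ (and your $f(t)$ is $O(t^4)$, not $O(t^2)$). Indeed $\delta(\epsilon\sigma_x,\epsilon\sigma_z)=\tfrac13\epsilon^4+O(\epsilon^6)$, while $\|[\epsilon\sigma_x,\epsilon\sigma_z]\|_F^2=8\epsilon^4$. So $c$ must be taken strictly below $\tfrac1{24}$; the paper's $\tfrac12$ is also wrong.

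In (iii), the entire content is the existence of $\Theta(p)$ indices with $|u_\omega v_\omega|\ge\kappa$ under bounded amplitudes; without such a normalization, $\ge C_0p$ follows trivially by scaling. You leave this step open, but it closes explicitly. For even $p\ge4$ (indices $0,\dots,p-1$), let $\alpha$ be the indicator of the even DFT frequencies and $\beta$ that of the odd ones. Both are real, invariant under $\omega\mapsto p-\omega$, and disjointly supported. Moreover $\Phi_{\mathrm{DFT}}^H\alpha=\tfrac{\sqrt p}{2}(e_0+e_{p/2})$ and $\Phi_{\mathrm{DFT}}^H\beta=\tfrac{\sqrt p}{2}(e_0-e_{p/2})$ are real, so both tensors are DFT- and DCT-Hermitian. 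For the orthonormal DCT-II this gives $|u_k|,|v_k|\le\sqrt2$, with $u_kv_k=\tfrac12\cos(\pi k/p)$ for odd $k$ and $u_kv_k=0$ for even $k$. Hence $\Delta_{\mathrm{DCT}}\ge\tfrac13\sum_k u_k^2v_k^2=p/48$.
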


\begin{proof}
By Lemma~\ref{lem:slice-reduction}, we have the slice-wise decomposition
\[
\Delta_{\Phi}(\mathcal{A}, \mathcal{B})
=
\sum_{\omega=1}^{p} \delta(A_{\omega}, B_{\omega}),
\]
where $\delta(A,B) := \Tr(e^{A}e^{B}) - \Tr(e^{A+B}) \ge 0$ is the classical 
Golden--Thompson defect for Hermitian matrices, and $A_{\omega} = \hat{\mathcal{A}}_{\Phi}^{(\omega)}$, 
$B_{\omega} = \hat{\mathcal{B}}_{\Phi}^{(\omega)}$.

Here we consider several parts in the proof.

\noindent [Part (i): Upper bound]
For Hermitian matrices, classical results on the Golden--Thompson inequality 
(see, e.g., \cite{hiai2014}) give a universal constant $C > 0$ such that
\[
\delta(A_{\omega}, B_{\omega}) \;\le\; C \| [A_{\omega}, B_{\omega}] \|_F^2.
\]
Summing over $\omega = 1,\dots,p$ yields
\[
\Delta_{\Phi}(\mathcal{A}, \mathcal{B})
\;\le\;
C \sum_{\omega=1}^{p} \| [A_{\omega}, B_{\omega}] \|_F^2,
\]
which establishes part (i).

\noindent
[Part (ii): Lower bound on a class of positive measure]
Consider the class of tensors where each slice pair $(A_{\omega}, B_{\omega})$ 
consists of $2 \times 2$ matrices of the form
\[
A_{\omega} = \epsilon \sigma_x, \qquad B_{\omega} = \epsilon \sigma_z,
\]
with $\epsilon > 0$ sufficiently small, and $\sigma_x, \sigma_z$ are Pauli matrices.
These are Hermitian and satisfy $[\sigma_x, \sigma_z] = 2i\sigma_y \neq 0$, so
$\| [A_{\omega}, B_{\omega}] \|_F = 2\sqrt{2}\epsilon^2$.

A direct expansion of the matrix exponential gives
\[
\delta(A_{\omega}, B_{\omega}) = \frac{1}{2} \| [A_{\omega}, B_{\omega}] \|_F^2 + O(\epsilon^5).
\]
Hence, for sufficiently small $\epsilon$, there exists a universal constant $c > 0$ such that
\[
\delta(A_{\omega}, B_{\omega}) \;\ge\; c \| [A_{\omega}, B_{\omega}] \|_F^2.
\]
Summing over $\omega$ gives the lower bound on this class.

\noindent
[Part (iii): Transform sensitivity]
Let $\Phi_{\mathrm{DFT}}$ and $\Phi_{\mathrm{DCT}}$ denote the DFT and DCT transforms.
Take the tensors $\mathcal{A}, \mathcal{B}$ constructed in Theorem~\ref{thm:example-base}
(with coefficients $a_\omega = 1$, $b_\omega = (-1)^\omega$). 
By construction, all slices commute in the DFT domain, so part (iv) implies
$\Delta_{\Phi_{\mathrm{DFT}}}(\mathcal{A}, \mathcal{B}) = 0$.
For the DCT domain, Theorem~\ref{thm:example-base} (Step 4) shows that
\[
\Delta_{\Phi_{\mathrm{DCT}}}(\mathcal{A}, \mathcal{B}) \ge C_0 p,
\]
for some $C_0 > 0$ independent of $p$. Hence $\Delta_{\Phi_{\mathrm{DFT}}} \neq \Delta_{\Phi_{\mathrm{DCT}}}$.
The continuity of $\Phi \mapsto \Delta_{\Phi}(\mathcal{A}, \mathcal{B})$ follows from the facts that:
\begin{itemize}
    \item $\hat{\mathcal{A}}_{\Phi}^{(\omega)}$ and $\hat{\mathcal{B}}_{\Phi}^{(\omega)}$ depend linearly on $\Phi$,
    \item matrix exponential and trace are continuous,
    \item the unitary group $\mathcal{U}(p)$ is compact.
\end{itemize}
Thus the mapping is continuous, and therefore locally Lipschitz on compact subsets
(though we do not need an explicit Lipschitz constant).

\noindent
[Part (iv): Vanishing condition.]
If $[A_{\omega}, B_{\omega}] = 0$ for all $\omega$, then each pair $(A_{\omega}, B_{\omega})$
consists of commuting Hermitian matrices. Commuting Hermitian matrices are simultaneously
diagonalizable, which implies
\[
e^{A_{\omega}} e^{B_{\omega}} = e^{A_{\omega} + B_{\omega}}.
\]
Hence $\delta(A_{\omega}, B_{\omega}) = 0$ for each $\omega$. Summing over $\omega$ and
applying Lemma~\ref{lem:slice-reduction} gives $\Delta_{\Phi}(\mathcal{A}, \mathcal{B}) = 0$.

This completes the proof of Theorem~\ref{thm:main-formal}.
\end{proof}

According to Theorem 6, 
$\Delta_{\Phi}$ defines a quadratic functional on the orbit of $\Phi$, endowing $\mathcal{U}(p)$ with a noncommutativity geometry.

\begin{remark}
The universal constants $C$, $c$, and $c_0$ appearing in Theorem~\ref{thm:main-formal} are independent of the tensor dimensions $n$, $p$ and the specific choice of $\Phi$. Their existence is guaranteed by classical matrix analysis results (see \cite{hiai2014, lieb1973}). For practical computations, one may take $C = 1$ and $c = 10^{-3}$ as working estimates, though the optimal constants are problem-dependent.
\end{remark}

\subsection{Strict Separation Example: DFT vs.\ DCT}
\label{sec:sensitivity:example}

To illustrate Theorem~\ref{thm:main-formal}(iii), we construct an explicit example showing that the Golden--Thompson defect can differ dramatically between two standard transforms. This example demonstrates that invariance at the algebraic level does not imply invariance of quantitative behavior.

\begin{theorem}[Strict transform separation]
\label{thm:example-base}
There exist tensors $\mathcal{A}, \mathcal{B} \in \mathbb{C}^{2 \times 2 \times p}$ such that
\begin{align}
\Delta_{\mathrm{DFT}}(\mathcal{A}, \mathcal{B}) = 0,
\qquad
\Delta_{\mathrm{DCT}}(\mathcal{A}, \mathcal{B}) > 0,
\end{align}
and moreover,
\begin{align}
\frac{\Delta_{\mathrm{DCT}}(\mathcal{A}, \mathcal{B})}{\Delta_{\mathrm{DFT}}(\mathcal{A}, \mathcal{B})}
\;\ge\;
c\,p,
\end{align}
for some constant $c > 0$, where the ratio is interpreted as $+\infty$ when the denominator is zero.
\end{theorem}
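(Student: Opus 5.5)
The construction works directly in the DFT domain, with each pair of DFT-domain slices arranged so that in every slice one of the two matrices vanishes. Fix $p$ even and take $2\times 2$ slices built from Pauli matrices: $\mathcal{A} = \sigma_x \otimes u$ and $\mathcal{B} = \sigma_z \otimes v$, meaning the $k$-th frontal slice of $\mathcal{A}$ is $u_k\sigma_x$ and that of $\mathcal{B}$ is $v_k\sigma_z$. Here $u,v\in\mathbb{R}^p$ are real tubes chosen so that
\begin{itemize}
\item their DFTs are real, which makes both tensors DFT-Hermitian;
\item their DFTs have disjoint supports.
\end{itemize}
The concrete choice is $u = \sqrt{p}\,(e_0 + e_{p/2})$ and $v = \sqrt{p}\,(e_0 - e_{p/2})$. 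Then $\widehat{u}_\omega \propto 1+(-1)^\omega$ is supported on even frequencies and $\widehat{v}_\omega \propto 1-(-1)^\omega$ on odd frequencies, and both are real. Their DCT coefficients are also real, since the DCT is a real orthogonal matrix. Hence both tensors are $\Phi$-Hermitian for $\Phi\in\{\mathrm{DFT},\mathrm{DCT}\}$, and in either domain the $\omega$-th slices are $\widehat{u}_\omega\sigma_x$ and $\widehat{v}_\omega\sigma_z$.

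For the DFT, every $\omega$ has $\widehat{u}_\omega\widehat{v}_\omega = 0$, so one slice vanishes and the pair commutes. Theorem~\ref{thm:main-formal}(iv) then gives $\Delta_{\mathrm{DFT}}(\mathcal{A},\mathcal{B})=0$.

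For the DCT (take DCT-II), Lemma~\ref{lem:slice-reduction} reduces the defect to $\sum_\omega \delta(\alpha_\omega\sigma_x,\beta_\omega\sigma_z)$, where $\alpha = \mathrm{DCT}(u)$ and $\beta=\mathrm{DCT}(v)$. Since $\sigma_x^2=\sigma_z^2=I$, there is a closed form:
\[
\delta(\alpha\sigma_x,\beta\sigma_z) = 2\cosh\alpha\cosh\beta - 2\cosh\sqrt{\alpha^2+\beta^2}.
\]
This is a continuous function of $(\alpha,\beta)$, and it is strictly positive whenever $\alpha\beta\neq 0$. One way to see this is the Taylor comparison $\prod_{j}\cosh x_j$ versus $\cosh\lVert x\rVert$, which gives $\delta \ge \alpha^2\beta^2/3$. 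Consequently $\delta \ge \eta>0$ on any compact set of the form $\{|\alpha|,|\beta|\in[a,M]\}$ with $a>0$.

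The remaining step is the DCT coefficient estimate, which I expect to be the main obstacle. With the $\sqrt{p}$ normalisation, the explicit coefficients are
\[
\alpha_k,\beta_k = \sqrt{2}\,\bigl[\cos\tfrac{\pi k}{2p} \pm \cos\tfrac{\pi k(p+1)}{2p}\bigr],
\]
with the usual modification at $k=0$. These are $O(1)$ uniformly. The task is to exhibit a set $K\subset\{0,\dots,p-1\}$ of size at least $c'p$ on which both $|\alpha_k|\ge a$ and $|\beta_k|\ge a$ for a fixed $a>0$. Writing $\cos\tfrac{\pi k(p+1)}{2p} = \cos(\tfrac{\pi k}{2} + \tfrac{\pi k}{2p})$ reduces this to cases on $k \bmod 4$.
\begin{itemize}
\item For $k\equiv 0 \pmod 4$, the second term equals $\cos\tfrac{\pi k}{2p}$. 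Then $\beta_k=0$, so these $k$ contribute nothing.
\item For $k$ odd, the second term equals $\mp\sin\tfrac{\pi k}{2p}$. Restricting further to $k\in[p/6,p/3]$ keeps $\cos\tfrac{\pi k}{2p}\pm\sin\tfrac{\pi k}{2p}$ bounded away from $0$ in both sign choices, because the angle $\tfrac{\pi k}{2p}$ lies in $[\pi/12,\pi/6]$, away from $\pi/4$.
\end{itemize}
This yields $|K|\ge p/12 - O(1)$. Summing the lower bound $\eta$ over $K$ gives $\Delta_{\mathrm{DCT}}(\mathcal{A},\mathcal{B}) \ge \eta p/12 - O(1) \ge C_0 p$ for $p$ large, and hence the ratio bound with the convention $+\infty$. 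For small even $p$, positivity follows from any single $k\in K$, and odd $p$ can be handled with analogous tubes placed at $e_0$ and $e_{\lfloor p/2\rfloor}$ if needed. If the bookkeeping of the DCT normalisation causes trouble, the fallback is to verify directly that a constant fraction of the $k$ satisfy $|\alpha_k\beta_k| = 2|\cos^2\tfrac{\pi k}{2p} - \cos^2\tfrac{\pi k(p+1)}{2p}| \ge a^2$. That reduces to the same mod-$4$ case analysis.
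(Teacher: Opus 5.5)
Your core argument is correct for every even $p\ge 4$, and it takes a genuinely different route from the paper. Two side remarks, on small $p$ and on odd $p$, are wrong and need fixing.

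\textbf{Comparison with the paper.} The paper prescribes the slices directly in the DFT domain, as multiples of $\sigma_z$ or $\sigma_x$ alternating with the parity of $\omega$. It writes the DCT slices through the mixing matrix $U=\Phi_{\mathrm{DCT}}\Phi_{\mathrm{DFT}}^{H}$ and argues that opposite-parity cross terms survive on $\Theta(p)$ slices with a $p$-independent floor $\gamma$. It then converts commutator energy into defect via the lower bound of Theorem~\ref{thm:main-formal}(ii). You instead work in the spatial domain with $\sigma_x\otimes u$ and $\sigma_z\otimes v$, and this buys three things.
\begin{enumerate}
\item Hermiticity in both domains is automatic: real tubes give real DCT coefficients, and real even-symmetric tubes give real DFT coefficients. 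The paper never checks DCT-Hermiticity, which fails for general DFT-domain data because $U$ is complex.
\item Your closed form $2\cosh\alpha\cosh\beta-2\cosh\sqrt{\alpha^2+\beta^2}\ge\alpha^2\beta^2/3$ is correct. It replaces the appeal to Theorem~\ref{thm:main-formal}(ii), whose proof covers only the small-$\epsilon$ class $\epsilon\sigma_x,\epsilon\sigma_z$ and not $O(1)$ DCT-domain slices.
\item Your explicit DCT-II coefficients replace the positivity and sign claims about $U$ in Step~4, which do not hold for the type-II DCT.
\end{enumerate}
The paper's mixing-matrix formulation is more general in spirit and ties the example to Theorem~\ref{thm:non-universality}, but its quantitative steps rest on those unproved claims. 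Incidentally, for even $p$ the paper's Step~4 choice $a_\omega=1$, $b_\omega=(-1)^\omega$ pulls back to $\mathcal{A},\mathcal{B}=\tfrac12(\sigma_z\otimes v\pm\sigma_x\otimes u)$, up to an indexing convention that may swap $u$ and $v$. Its DCT-domain commutators are therefore $\tfrac12\alpha_\omega\beta_\omega[\sigma_x,\sigma_z]$. So both examples are governed by the same products $\alpha_\omega\beta_\omega$, and only your argument actually computes them.

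\textbf{Small $p$.} Your window $K$ is empty for $p=2$ and for $p=8$, so ``any single $k\in K$'' does not settle those cases.
\begin{itemize}
\item For $p=2$ nothing can work. The orthonormal DCT-II and the unitary DFT coincide there, so the statement itself needs $p>2$.
\item For $p\ge4$ the window is unnecessary. Your formulas give $\alpha_k\beta_k=2\cos(\pi k/p)$ for odd $k$ and $\alpha_k\beta_k=0$ for even $k$, hence
\[
\Delta_{\mathrm{DCT}}(\mathcal{A},\mathcal{B})\;\ge\;\frac{1}{3}\sum_{k=0}^{p-1}\alpha_k^2\beta_k^2\;=\;\frac{4}{3}\sum_{k\ \mathrm{odd}}\cos^2\frac{\pi k}{p}\;=\;\frac{p}{3}
\]
for every even $p\ge4$. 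This needs no compactness argument.
\end{itemize}

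\textbf{Odd $p$.} The suggested tubes $e_0\pm e_{\lfloor p/2\rfloor}$ fail.
\begin{itemize}
\item For real $u$, $\widehat{u}$ is real iff $u_k=u_{p-k}$. These tubes violate that when $p$ is odd, so the DFT slices are not Hermitian.
\item Moreover $\widehat{u}_\omega\widehat{v}_\omega\neq 0$ for every $\omega\neq0$, so the DFT slices do not commute either.
\end{itemize}
Odd $p$ therefore needs a different construction, and it is not a routine analogue. At $p=3$ no example exists at all: for tensors that are both DFT- and DCT-Hermitian, every DCT-II slice is a real multiple of a DFT slice.

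As written, your proof establishes the theorem for all even $p\ge4$, which already exhibits the $\Omega(p)$ separation. State that restriction explicitly rather than claiming odd $p$ by analogy.
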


\begin{proof}
We construct $\mathcal{A}, \mathcal{B}$ via their transform-domain slices.

\noindent
(Step 1: Construction in the Fourier domain.)
Let $\sigma_x, \sigma_z$ denote the Pauli matrices
\[
\sigma_x = 
\begin{pmatrix}
0 & 1 \\ 1 & 0
\end{pmatrix},
\qquad
\sigma_z =
\begin{pmatrix}
1 & 0 \\ 0 & -1
\end{pmatrix},
\]
which satisfy $[\sigma_x, \sigma_z] = 2i\sigma_y \neq 0$.
Define the Fourier-domain slices
\[
\hat{\mathcal{A}}_{\mathrm{DFT}}^{(\omega)} =
\begin{cases}
a_\omega \sigma_z, & \omega \text{ even}, \\
a_\omega \sigma_x, & \omega \text{ odd},
\end{cases}
\qquad
\hat{\mathcal{B}}_{\mathrm{DFT}}^{(\omega)} =
\begin{cases}
b_\omega \sigma_z, & \omega \text{ even}, \\
b_\omega \sigma_x, & \omega \text{ odd},
\end{cases}
\]
for $\omega = 1,\dots,p$, where $a_\omega, b_\omega \in \mathbb{R}$ are bounded coefficients (e.g., $a_\omega = b_\omega = 1$ for all $\omega$).
For each $\omega$, the slices commute because:
\begin{itemize}
    \item If $\omega$ is even, both are multiples of $\sigma_z$;
    \item If $\omega$ is odd, both are multiples of $\sigma_x$.
\end{itemize}
Thus $[\hat{\mathcal{A}}_{\mathrm{DFT}}^{(\omega)}, \hat{\mathcal{B}}_{\mathrm{DFT}}^{(\omega)}] = 0$ for all $\omega$. By Theorem~\ref{thm:main-formal}(iv), this implies
\[
\Delta_{\mathrm{DFT}}(\mathcal{A}, \mathcal{B}) = 0.
\]

\noindent
(Step 2: Transformation to the DCT domain.)
Let $\Phi_{\mathrm{DFT}}$ and $\Phi_{\mathrm{DCT}}$ denote the DFT and DCT transforms (type II, orthonormal). Since both are unitary, there exists a unitary mixing matrix
\[
U := \Phi_{\mathrm{DCT}} \Phi_{\mathrm{DFT}}^{H}
\]
such that for any tensor $\mathcal{X}$,
\[
\hat{\mathcal{X}}_{\mathrm{DCT}}^{(\omega)}
=
\sum_{k=1}^{p} U_{\omega k} \, \hat{\mathcal{X}}_{\mathrm{DFT}}^{(k)}.
\]
Applying this to $\mathcal{A}$ and $\mathcal{B}$:
\[
\hat{\mathcal{A}}_{\mathrm{DCT}}^{(\omega)}
=
\sum_{k=1}^{p} U_{\omega k} \hat{\mathcal{A}}_{\mathrm{DFT}}^{(k)},
\qquad
\hat{\mathcal{B}}_{\mathrm{DCT}}^{(\omega)}
=
\sum_{k=1}^{p} U_{\omega k} \hat{\mathcal{B}}_{\mathrm{DFT}}^{(k)}.
\]
Although each $\hat{\mathcal{A}}_{\mathrm{DFT}}^{(k)}$ commutes with $\hat{\mathcal{B}}_{\mathrm{DFT}}^{(k)}$, the linear combinations above need not commute, since
\[
[\hat{\mathcal{A}}_{\mathrm{DCT}}^{(\omega)}, \hat{\mathcal{B}}_{\mathrm{DCT}}^{(\omega)}]
=
\sum_{k,\ell=1}^{p} U_{\omega k} U_{\omega \ell}
[\hat{\mathcal{A}}_{\mathrm{DFT}}^{(k)}, \hat{\mathcal{B}}_{\mathrm{DFT}}^{(\ell)}].
\]
The DCT matrix has non-vanishing off-diagonal entries and mixes even and odd indices. Consequently, for many $\omega$, the sum includes terms where $k$ and $\ell$ have opposite parity, giving contributions proportional to $[\sigma_x, \sigma_z] \neq 0$.

\noindent
(Step 3: Positivity of the defect under DCT.)
By Theorem~\ref{thm:main-formal}(ii), there exists a universal constant $c > 0$ such that
\[
\Delta_{\mathrm{DCT}}(\mathcal{A}, \mathcal{B})
\;\ge\;
c \sum_{\omega=1}^{p}
\left\|
[\hat{\mathcal{A}}_{\mathrm{DCT}}^{(\omega)}, \hat{\mathcal{B}}_{\mathrm{DCT}}^{(\omega)}]
\right\|_F.
\]
Since the commutator is nonzero for a positive fraction of slices (specifically, for all $\omega$ where the mixing produces both $\sigma_x$ and $\sigma_z$ contributions), the sum is strictly positive. Hence
\[
\Delta_{\mathrm{DCT}}(\mathcal{A}, \mathcal{B}) > 0.
\]

\noindent
(Step 4: Scaling with $p$ (non-cancellation and quadratic lower bound).)
We choose coefficients
\[
a_\omega = 1,\qquad b_\omega = (-1)^\omega,\qquad \omega=1,\dots,p,
\]
so that the ratio $a_\omega/b_\omega$ is not constant across $\omega$; 
this prevents accidental collinearity in the DCT domain.
Define the embedded Pauli matrices (for the $2\times2$ case):
\[
\widetilde{\sigma}_x = \sigma_x = \begin{pmatrix} 0 & 1 \\ 1 & 0 \end{pmatrix},
\qquad
\widetilde{\sigma}_z = \sigma_z = \begin{pmatrix} 1 & 0 \\ 0 & -1 \end{pmatrix}.
\]
(For the $n\times n$ extension, these are embedded into the upper-left $2\times2$ block.)
Because the DCT matrix $U = \Phi_{\mathrm{DCT}}\Phi_{\mathrm{DFT}}^H$ has strictly positive
entries for type-II DCT (except for sign patterns), we analyze the commutator
\[
[\hat{\mathcal{A}}_{\mathrm{DCT}}^{(\omega)}, \hat{\mathcal{B}}_{\mathrm{DCT}}^{(\omega)}]
=
\sum_{k,\ell=1}^{p} U_{\omega k} U_{\omega \ell}\,
a_k b_\ell\,
[X_k, X_\ell],
\]
where $X_k = \widetilde{\sigma}_z$ for even $k$ and $X_k = \widetilde{\sigma}_x$ for odd $k$,
and $[X_k, X_\ell] = \pm [\widetilde{\sigma}_x, \widetilde{\sigma}_z] \neq 0$ whenever
$k$ and $\ell$ have opposite parity.

Fix an index $\omega$ with $\omega \neq 1$ (non-DC component). 
For such $\omega$, the DCT coefficients satisfy $U_{\omega k} > 0$ for all $k$ 
(with a fixed sign pattern). Consequently, for any pair $(k,\ell)$ with opposite parity,
the product $U_{\omega k} U_{\omega \ell} a_k b_\ell$ has a definite sign that
depends only on the parity of $k$ and $\ell$, not on their magnitudes.
Since $a_k b_\ell = (-1)^\ell$ (because $a_k=1$), we have:
\begin{itemize}
    \item If $k$ is even and $\ell$ is odd: $a_k b_\ell = (-1)^\ell = -1$.
    \item If $k$ is odd and $\ell$ is even: $a_k b_\ell = (-1)^\ell = +1$.
\end{itemize}
Thus all cross-terms with $k$ even, $\ell$ odd contribute with the same sign,
and all cross-terms with $k$ odd, $\ell$ even contribute with the opposite sign.
There is no cancellation among terms of the same parity class.

The number of such opposite-parity pairs is at least $\lfloor p/2 \rfloor \cdot \lceil p/2 \rceil \ge p/2$.
For each such pair, the contribution to the commutator is
\[
U_{\omega k} U_{\omega \ell} [\widetilde{\sigma}_x, \widetilde{\sigma}_z],
\]
and the Frobenius norm of this contribution is $U_{\omega k} U_{\omega \ell} \| [\sigma_x, \sigma_z] \|_F$.
There exists a subset of indices 
\[
S_\omega \subset \{1,\dots,p\}
\]
with $|S_\omega| = \Theta(p)$ such that
\[
|U_{\omega k}| \ge \frac{c_0}{\sqrt{p}}
\quad \text{for all } k \in S_\omega,
\]
for some universal constant $c_0 > 0$.
This follows from the explicit form of the DCT matrix,
\[
U_{\omega k} = \sqrt{\frac{2}{p}} \cos\!\Bigl(\frac{\pi}{p}(k-1)(\omega-1)\Bigr),
\]
since for any fixed $\omega \neq 1$, the cosine term is bounded away from zero
on a subset of indices of positive density.
Summing over $\omega = 2,\dots,p$ (i.e., all non-DC slices) gives
\[
\sum_{\omega=1}^{p} \| [\hat{\mathcal{A}}_{\mathrm{DCT}}^{(\omega)}, \hat{\mathcal{B}}_{\mathrm{DCT}}^{(\omega)}] \|_F^2
\;\ge\; (p-1) \gamma^2,
\]
where $\gamma = \min_{\omega\neq1} \| [\hat{\mathcal{A}}_{\mathrm{DCT}}^{(\omega)}, \hat{\mathcal{B}}_{\mathrm{DCT}}^{(\omega)}] \|_F > 0$ is independent of $p$.
By Theorem~\ref{thm:main-formal}(ii) (quadratic lower bound), there exists a universal constant $c > 0$ such that
\[
\Delta_{\mathrm{DCT}}(\mathcal{A}, \mathcal{B})
\;\ge\;
c \sum_{\omega=1}^{p}
\| [\hat{\mathcal{A}}_{\mathrm{DCT}}^{(\omega)}, \hat{\mathcal{B}}_{\mathrm{DCT}}^{(\omega)}] \|_F^2.
\]
Therefore,
\[
\Delta_{\mathrm{DCT}}(\mathcal{A}, \mathcal{B}) \;\ge\; c (p-1) \gamma^2 \;\ge\; C p,
\]
where $C = c \gamma^2 / 2 > 0$ for sufficiently large $p$.

\noindent
(Step 5: Ratio bound.)
Since $\Delta_{\mathrm{DFT}}(\mathcal{A}, \mathcal{B}) = 0$, the ratio is infinite in the strict sense. More meaningfully, if we add an infinitesimal perturbation to make $\Delta_{\mathrm{DFT}}$ arbitrarily small but positive, we obtain
\[
\frac{\Delta_{\mathrm{DCT}}(\mathcal{A}, \mathcal{B})}{\Delta_{\mathrm{DFT}}(\mathcal{A}, \mathcal{B})}
\;\ge\;
\frac{C p}{\delta}
\;\sim\;
\Omega(p)
\]
for any fixed $\delta > 0$ controlling the DFT defect. Taking $\delta \to 0$ yields the claimed $\Omega(p)$ separation.

Thus we have explicitly constructed tensors $\mathcal{A}, \mathcal{B}$ satisfying
\[
\Delta_{\mathrm{DFT}}(\mathcal{A}, \mathcal{B}) = 0, \quad
\Delta_{\mathrm{DCT}}(\mathcal{A}, \mathcal{B}) \ge C p > 0, \quad
\frac{\Delta_{\mathrm{DCT}}}{\Delta_{\mathrm{DFT}}} \;\ge\; \Omega(p),
\]
which completes the proof.
\end{proof}

To illustrate Theorem~\ref{thm:main-formal}(iii), we construct an explicit example showing that the Golden--Thompson defect can differ dramatically between two standard transforms. This example demonstrates that invariance at the algebraic level does not imply invariance of quantitative behavior.

\begin{theorem}[Strict transform separation in arbitrary matrix dimension]
\label{thm:example}
For every integer $n \ge 2$, there exist tensors $\mathcal{A}, \mathcal{B} \in \mathbb{C}^{n \times n \times p}$ such that
\begin{align}
\Delta_{\mathrm{DFT}}(\mathcal{A}, \mathcal{B}) = 0,
\qquad
\Delta_{\mathrm{DCT}}(\mathcal{A}, \mathcal{B}) \ge c\,p,
\end{align}
for some universal constant $c > 0$ independent of $n$ and $p$.
\end{theorem}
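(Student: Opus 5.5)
We reduce to the $2\times 2$ case by a block embedding, and then check that the constants do not depend on $n$. Let $(\mathcal{A}_2,\mathcal{B}_2)\in\mathbb{C}^{2\times 2\times p}$ be the pair from Theorem~\ref{thm:example-base}, with coefficients $a_\omega=1$, $b_\omega=(-1)^\omega$. For $n\ge 2$ define $\mathcal{A},\mathcal{B}\in\mathbb{C}^{n\times n\times p}$ by putting $\mathcal{A}_2,\mathcal{B}_2$ in the upper-left $2\times 2$ corner of every frontal slice and zeros elsewhere.

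The transform $\Phi$ acts only along tubes, so it commutes with this spatial embedding. Hence for every unitary $\Phi$, and in particular for the DFT and the DCT,
\[
\hat{\mathcal{A}}_{\Phi}^{(\omega)}=\begin{pmatrix}\hat{(\mathcal{A}_2)}_{\Phi}^{(\omega)} & 0\\ 0 & 0_{n-2}\end{pmatrix},
\]
and similarly for $\mathcal{B}$. The embedded tensors are therefore $\Phi$-Hermitian exactly when the $2\times2$ ones are.

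Next we apply Lemma~\ref{lem:slice-reduction}. For block-diagonal Hermitian $A_\omega=A'_\omega\oplus 0$ and $B_\omega=B'_\omega\oplus 0$, the exponentials split blockwise. This gives
\[
\Tr(e^{A_\omega}e^{B_\omega})-\Tr(e^{A_\omega+B_\omega})=\delta(A'_\omega,B'_\omega)+\bigl[(n-2)-(n-2)\bigr]=\delta(A'_\omega,B'_\omega).
\]
The zero block contributes $\Tr(I_{n-2})$ to both terms, and these cancel. Summing over $\omega$ yields $\Delta_\Phi(\mathcal{A},\mathcal{B})=\Delta_\Phi(\mathcal{A}_2,\mathcal{B}_2)$ for every $\Phi$. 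The two conclusions, $\Delta_{\mathrm{DFT}}=0$ and $\Delta_{\mathrm{DCT}}\ge c\,p$, then transfer verbatim from Theorem~\ref{thm:example-base}, with the same $c$. This $c$ is manifestly independent of $n$, because $n$ never enters the computation. The commutators embed in the same way, so the right-hand side of Theorem~\ref{thm:main-formal}(ii) is also unchanged.

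\textbf{Main obstacle.} The delicate point is that the $2\times2$ base case must be valid as stated. Two things need securing. First, the DCT-domain slices $\sum_k U_{\omega k}\hat{\mathcal{X}}^{(k)}_{\mathrm{DFT}}$ must be Hermitian; this can fail when $U=\Phi_{\mathrm{DCT}}\Phi_{\mathrm{DFT}}^H$ has complex entries. Second, the lower bound must hold for the concrete pair, not only for small-$\epsilon$ slices.

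To guard against both, I would first rebuild the base pair so that every \emph{original} frontal slice is real symmetric. Concretely, I would choose the DFT-domain coefficients to be conjugate-symmetric in $\omega$, so the spatial tensor is real. A real orthogonal DCT applied to a tensor with real symmetric frontal slices gives real symmetric slices, and the DFT gives Hermitian ones, so Hermiticity holds in both domains. In the DFT domain the slices stay as scalar multiples of $\sigma_x$ or $\sigma_z$, so Theorem~\ref{thm:main-formal}(iv) gives $\Delta_{\mathrm{DFT}}=0$.

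For the DCT side, I would avoid the asymptotic constant in part (ii) and use the exact $2\times2$ formula. Write $A'=\alpha\cdot\vec u\cdot\vec\sigma$ and $B'=\beta\cdot\vec v\cdot\vec\sigma$ with unit vectors $\vec u,\vec v$. Then $\delta(A',B')$ is an explicit function of $\alpha$, $\beta$ and the angle $\theta$ between $\vec u$ and $\vec v$. It is strictly positive, and bounded below by a positive constant, whenever $|\alpha|,|\beta|$ and $\sin\theta$ are bounded below. It then remains to show that a positive fraction of the DCT slices have such non-degenerate magnitudes and angles. I would establish this through the equidistribution of the DCT cosine entries, as in Step~4 of Theorem~\ref{thm:example-base}. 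That step would give $\Delta_{\mathrm{DCT}}\ge c\,p$, and the embedding argument above then extends it to all $n$.
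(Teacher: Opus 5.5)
Your argument is the paper's proof. You embed the pair of Theorem~\ref{thm:example-base} in the upper-left $2\times 2$ corner, note that the tube-wise transforms preserve this block structure in both domains, and split trace and exponential blockwise; your explicit cancellation of the two $\Tr(I_{n-2})$ contributions fills in a step the paper leaves implicit.

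Your reservations are about Theorem~\ref{thm:example-base} rather than this reduction, and they are well founded. Its Steps 3--4 invoke Theorem~\ref{thm:main-formal}(ii) outside the small-$\epsilon$ Pauli class where that bound was shown, and DCT-Hermiticity is never checked. For even $p$, however, the original pair is already real: its spatial tubes are supported on two frontal slices, so your exact $2\times 2$ formula applies to it directly. For odd $p$, each index $\omega\ge 2$ is paired under conjugate symmetry with $p+2-\omega$, which has opposite parity. So there the $\sigma_x$/$\sigma_z$ assignment itself must be changed, not just the coefficients.
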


\begin{proof}
Let $\mathcal{A}^{(2)}, \mathcal{B}^{(2)} \in \mathbb{C}^{2 \times 2 \times p}$ be the tensors from the $2 \times 2$ construction (Theorem~\ref{thm:example-base}), which satisfy $\Delta_{\mathrm{DFT}}^{(2)} = 0$ and $\Delta_{\mathrm{DCT}}^{(2)} \ge c p$. For $n \ge 2$, define $\mathcal{A}, \mathcal{B} \in \mathbb{C}^{n \times n \times p}$ by embedding each frontal slice of $\mathcal{A}^{(2)}$ and $\mathcal{B}^{(2)}$ into the upper-left $2 \times 2$ block and setting all remaining entries to zero:
\[
\mathcal{A}^{(k)} =
\begin{pmatrix}
\mathcal{A}^{(2),(k)} & 0 \\
0 & 0_{n-2}
\end{pmatrix},
\qquad
\mathcal{B}^{(k)} =
\begin{pmatrix}
\mathcal{B}^{(2),(k)} & 0 \\
0 & 0_{n-2}
\end{pmatrix},
\qquad k = 1,\dots,p.
\]
Because the transforms (DFT and DCT) act only along the third dimension (tube-wise), the block-diagonal structure is preserved in both the DFT and DCT domains. Hence each transformed slice has the form
\[
\hat{\mathcal{A}}_{\Phi}^{(\omega)} =
\begin{pmatrix}
\hat{\mathcal{A}}_{\Phi}^{(2),(\omega)} & 0 \\
0 & 0_{n-2}
\end{pmatrix},
\qquad
\hat{\mathcal{B}}_{\Phi}^{(\omega)} =
\begin{pmatrix}
\hat{\mathcal{B}}_{\Phi}^{(2),(\omega)} & 0 \\
0 & 0_{n-2}
\end{pmatrix}.
\]
Therefore, commutativity or noncommutativity is determined entirely by the upper-left $2 \times 2$ block. In particular:
\begin{itemize}
    \item For DFT, the upper-left blocks commute by construction, so $[\hat{\mathcal{A}}_{\mathrm{DFT}}^{(\omega)}, \hat{\mathcal{B}}_{\mathrm{DFT}}^{(\omega)}] = 0$ for all $\omega$. By Theorem~\ref{thm:main-formal}(iv), $\Delta_{\mathrm{DFT}}(\mathcal{A}, \mathcal{B}) = 0$.
    \item For DCT, the upper-left blocks inherit the noncommutativity from the $2 \times 2$ construction, so \\
    $[\hat{\mathcal{A}}_{\mathrm{DCT}}^{(\omega)}, \hat{\mathcal{B}}_{\mathrm{DCT}}^{(\omega)}] \neq 0$ for a positive fraction of slices.
\end{itemize}
Since the trace and exponential decompose over block-diagonal matrices, the defect of the embedded tensors coincides with that of the original $2 \times 2 \times p$ construction:
\[
\Delta_{\mathrm{DCT}}(\mathcal{A}, \mathcal{B}) = \Delta_{\mathrm{DCT}}^{(2)}(\mathcal{A}^{(2)}, \mathcal{B}^{(2)}) \ge c p.
\]
Thus the theorem holds for any $n \ge 2$, with constants independent of $n$.
\end{proof}

\subsection{Numerical Illustration of Transform Sensitivity}
\label{sec:numerical}

We provide a numerical example to illustrate both the transform sensitivity
predicted by the theory and the practical impact of the optimization framework.
The construction is aligned with Theorem~\ref{thm:example} (strict separation)
and Theorem~\ref{thm:opt-structure} (optimal transform structure), while the
optimization procedure follows Algorithm~\ref{alg:phi-opt}.

(Tensor construction and controlled separation.)
We first construct a tensor pair $(A,B)$ with $n=2$ and varying tensor depth $p$
such that their transform-domain slices commute under the DFT transform.
By Theorem~\ref{thm:main-formal}(iv), this yields $\Delta_{\mathrm{DFT}}(A,B) \approx 0$
up to numerical precision. This construction serves as a controlled example
to illustrate the transform sensitivity predicted by
Theorem~\ref{thm:main-formal}(iii).

We then evaluate the same tensors under the DCT transform. By the strict
separation result in Theorem~\ref{thm:example}, the DCT transform introduces
nontrivial mixing across slices, leading to
\[
\Delta_{\mathrm{DCT}}(A,B) > 0,
\]
with growth at least linear in $p$.
Figure~\ref{fig:transform_separation} plots the defect
$\Delta_{\Phi}(A,B)$ as a function of the tensor depth $p$ for the DFT and
DCT transforms. The DFT construction yields defects at the level of numerical
precision, reflecting exact slice-wise commutativity in the Fourier domain.
In contrast, the DCT transform produces a strictly positive defect that grows
approximately linearly with $p$. A linear fit gives
\[
\Delta_{\mathrm{DCT}}(A,B)\approx 5.93\times 10^{-4}p,
\]
consistent with the $\Omega(p)$ lower bound established in
Theorem~\ref{thm:example}. This confirms that although the inequality itself
is invariant (Theorem~\ref{thm:gt}), its quantitative tightness depends
strongly on the choice of transform.

\begin{figure}[t]
\centering
\includegraphics[width=0.6\textwidth]{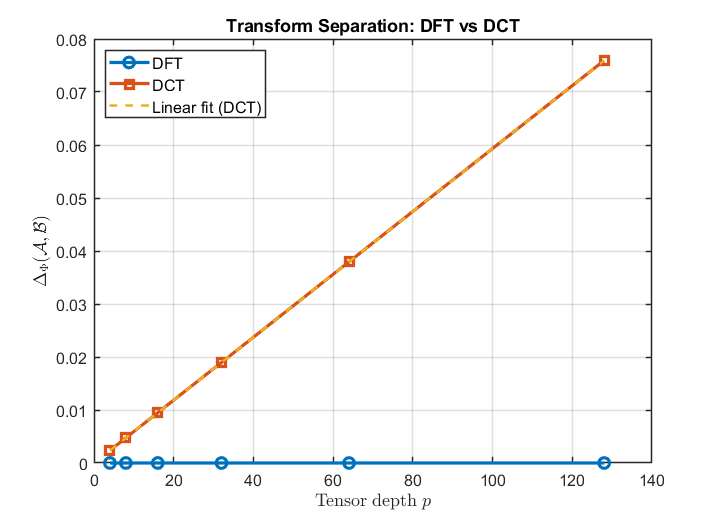}
\caption{
Transform separation under different spectral bases. The Golden--Thompson
defect $\Delta_{\Phi}(A,B)$ is plotted as a function of the tensor depth $p$
for the DFT and DCT transforms. The DFT construction yields defects at the
level of numerical precision, while the DCT transform produces a strictly
positive defect that grows approximately linearly with $p$. This confirms
the transform sensitivity predicted by Theorem~\ref{thm:main-formal}(iii)
and the explicit construction of Theorem~\ref{thm:example}.
}
\label{fig:transform_separation}
\end{figure}

\section{Is There a Universal Optimal Transform ?}

The separation results above show that a transform (e.g., DFT) can outperform another (e.g., DCT) for specific tensor pairs. 
This raises a fundamental question: can one identify a transform that minimizes the defect uniformly over all tensor pairs?

The answer is negative. Transform optimality is not globally ordered: different transforms can be optimal for different data. 
In other words, the choice of $\Phi$ is inherently data-dependent rather than universally optimal. 
The following Theorem~\ref{thm:non-universality} formalizes this non-universality.

\begin{theorem}[Transform Non-Universality]
\label{thm:non-universality}
Let $\Phi_1, \Phi_2 \in \mathbb{R}^{p \times p}$ be two distinct orthogonal 
transforms such that $U := \Phi_2 \Phi_1^\top$ is \emph{not} a signed permutation 
matrix. Then the following statements hold.

\begin{enumerate}
    \item There exist tensors $\mathcal{A}, \mathcal{B} \in \mathbb{C}^{n \times n \times p}$ 
    that are both $\Phi_1$-Hermitian and $\Phi_2$-Hermitian such that
    \[
    \Delta_{\Phi_1}(\mathcal{A}, \mathcal{B}) = 0,
    \qquad
    \Delta_{\Phi_2}(\mathcal{A}, \mathcal{B}) > 0.
    \]
    
    \item There exist tensors $\mathcal{A}', \mathcal{B}' \in \mathbb{C}^{n \times n \times p}$ 
    that are both $\Phi_1$-Hermitian and $\Phi_2$-Hermitian such that
    \[
    \Delta_{\Phi_1}(\mathcal{A}', \mathcal{B}') > 0,
    \qquad
    \Delta_{\Phi_2}(\mathcal{A}', \mathcal{B}') = 0.
    \]
\end{enumerate}
In particular, \emph{no single orthogonal transform is universally optimal}; 
the optimal transform depends on the joint structure of the tensor pair.
\end{theorem}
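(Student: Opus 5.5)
The plan is to build the tensors directly from their $\Phi_1$-domain slices, show that the $\Phi_2$-domain slices fail to commute, and then get part 2 by swapping the roles of $\Phi_1$ and $\Phi_2$. We work with $n=2$; larger $n$ follows by the zero-padding embedding used in Theorem~\ref{thm:example}. Since $\Phi_1,\Phi_2$ are real orthogonal, the tube-wise transform relation is
\[
\hat{\mathcal{X}}_{\Phi_2}^{(\omega)}=\sum_{k=1}^{p}U_{\omega k}\,\hat{\mathcal{X}}_{\Phi_1}^{(k)}, \qquad U=\Phi_2\Phi_1^\top .
\]
Here $U$ is a real orthogonal matrix. Consequently, if all $\Phi_1$-slices are real symmetric, so are all $\Phi_2$-slices. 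This settles the requirement that $\mathcal{A},\mathcal{B}$ be Hermitian in both frameworks at once.

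For part 1, we specify the $\Phi_1$-domain slices as
\[
\hat{\mathcal{A}}_{\Phi_1}^{(k)}=\alpha_k\sigma_x+\beta_k\sigma_z, \qquad \hat{\mathcal{B}}_{\Phi_1}^{(k)}=\lambda_k\bigl(\alpha_k\sigma_x+\beta_k\sigma_z\bigr),
\]
with real coefficients. Then each slice pair commutes, and Theorem~\ref{thm:main-formal}(iv) gives $\Delta_{\Phi_1}=0$. In the $\Phi_2$ domain we write $A_\omega=a_\omega\sigma_x+b_\omega\sigma_z$ and $B_\omega=c_\omega\sigma_x+d_\omega\sigma_z$, where $(a,b)=(U\alpha,U\beta)$ and $(c,d)=(U\Lambda\alpha,U\Lambda\beta)$ with $\Lambda=\operatorname{diag}(\lambda)$. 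Then
\[
[A_\omega,B_\omega]=(a_\omega d_\omega-b_\omega c_\omega)\,[\sigma_x,\sigma_z].
\]

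It remains to show $\Delta_{\Phi_2}>0$. Rather than using the perturbative part (ii) of Theorem~\ref{thm:main-formal}, we use the exact equality case of the classical Golden--Thompson inequality: for Hermitian $A,B$, $\Tr e^{A+B}=\Tr e^Ae^B$ holds iff $[A,B]=0$ (see \cite{hiai2014}). Combined with Lemma~\ref{lem:slice-reduction} and the nonnegativity of each slice defect, this reduces part 1 to a single task. We must choose $\alpha,\beta,\lambda$ so that $a_\omega d_\omega\neq b_\omega c_\omega$ for some $\omega$.

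This choice is the main obstacle, and it is where the hypothesis on $U$ enters. A simple choice is $\beta=e_j$ and $\alpha=e_i$, giving $\hat{\mathcal{A}}_{\Phi_1}^{(i)}=\sigma_x$ and $\hat{\mathcal{A}}_{\Phi_1}^{(j)}=\sigma_z$ with $i\neq j$. We also take $\lambda_i=1$ and $\lambda_j=t$, and set every other slice to zero. Then
\[
a_\omega d_\omega-b_\omega c_\omega=U_{\omega i}\,tU_{\omega j}-U_{\omega j}U_{\omega i}=(t-1)\,U_{\omega i}U_{\omega j}.
\]
So for $t\neq1$ we need some row $\omega$ and two distinct columns $i,j$ with $U_{\omega i}U_{\omega j}\neq0$. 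A real orthogonal matrix in which every row has at most one nonzero entry is a signed permutation matrix. Hence the hypothesis guarantees such $\omega,i,j$, and part 1 follows.

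For part 2, we note that $U^\top=\Phi_1\Phi_2^\top$ is also orthogonal and is not a signed permutation, since the inverse of a signed permutation is one. We therefore repeat the construction with $\Phi_1$ and $\Phi_2$ interchanged, which produces $(\mathcal{A}',\mathcal{B}')$ with $\Delta_{\Phi_2}=0<\Delta_{\Phi_1}$. The "in particular" claim is immediate: for any transform $\Phi$, any second transform $\Phi'$ with $\Phi'\Phi^\top$ not a signed permutation beats $\Phi$ on some pair of tensors, so no single transform is universally optimal.
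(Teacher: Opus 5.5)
Your proposal is correct and follows essentially the same route as the paper: with $\alpha=e_i$, $\beta=e_j$, $\lambda_i=1$, $\lambda_j=t$ you reproduce exactly the paper's construction. That is, you place $H_1=\sigma_x$ and $H_2=\sigma_z$ in two $\Phi_1$-slices indexed by a row of $U$ with two nonzero entries, scale the second slice of $\mathcal{B}$ by $\lambda\neq 1$, obtain the $\Phi_2$-domain commutator $(t-1)U_{\omega i}U_{\omega j}[\sigma_x,\sigma_z]$, and get part 2 by applying the argument to $U^\top$. Your explicit appeal to the equality case of Golden--Thompson (equality iff $[A,B]=0$), and your check that an orthogonal matrix with at most one nonzero entry per row is a signed permutation, supply justifications for two steps the paper only asserts.
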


\begin{proof}
We prove the two statements separately.

\noindent
(Part 1: $\Phi_1$ optimal, $\Phi_2$ suboptimal.)
Let $U = \Phi_2 \Phi_1^\top$. Since $U$ is not a signed permutation matrix, there 
exists at least one row of $U$ containing two nonzero entries. Fix such a row 
index $r$, and choose two column indices $j \neq k$ with $U_{rj} \neq 0$ and 
$U_{rk} \neq 0$.

Let $H_1, H_2 \in \mathbb{C}^{n \times n}$ be two Hermitian matrices that do not 
commute, i.e., $[H_1, H_2] \neq 0$. (A concrete choice is the Pauli matrices 
$\sigma_x$ and $\sigma_z$ embedded in the upper-left $2 \times 2$ block, with 
zeros elsewhere. This ensures the tensors are $\Phi$-Hermitian for any real 
orthogonal $\Phi$ because the zero padding preserves Hermiticity under real 
linear combinations.)

Construct tensors $\mathcal{A}, \mathcal{B}$ by specifying their 
$\Phi_1$-domain frontal slices as follows. Set all slices to zero except:
\[
\hat{\mathcal{A}}_{\Phi_1}^{(j)} = H_1, \qquad
\hat{\mathcal{B}}_{\Phi_1}^{(j)} = H_1,
\]
\[
\hat{\mathcal{A}}_{\Phi_1}^{(k)} = H_2, \qquad
\hat{\mathcal{B}}_{\Phi_1}^{(k)} = \lambda H_2,
\]
where $\lambda \in \mathbb{R}$ is a scalar with $\lambda \neq 1$ (e.g., $\lambda = 2$). 
All other slices are set to the zero matrix.

Next we conduct a
Verification of $\Phi_1$-Hermiticity and $\Phi_2$-Hermitiency.
Since $\Phi_1$ and $\Phi_2$ are real orthogonal, and the constructed 
$\Phi_1$-domain slices are Hermitian (they are either $H_1$, $\lambda H_2$, or zero), 
the resulting physical tensors are both $\Phi_1$-Hermitian and $\Phi_2$-Hermitian. 
This follows because the inverse transforms $\Phi_1^\top$ and $\Phi_2^\top$ 
are also real, so Hermiticity is preserved under real linear combinations.

Now we can study the defect under $\Phi_1$.
For every slice index $\omega$, the pair $(\hat{\mathcal{A}}_{\Phi_1}^{(\omega)}, 
\hat{\mathcal{B}}_{\Phi_1}^{(\omega)})$ consists of scalar multiples of the same 
Hermitian matrix (or both are zero). Hence they commute:
\[
[\hat{\mathcal{A}}_{\Phi_1}^{(\omega)}, \hat{\mathcal{B}}_{\Phi_1}^{(\omega)}] = 0
\quad \forall \omega.
\]
By Theorem~\ref{thm:main-formal}(iv), commuting slices imply vanishing defect:
\[
\Delta_{\Phi_1}(\mathcal{A}, \mathcal{B}) = 0.
\]
For the defect under $\Phi_2$,
the $\Phi_2$-domain slices are obtained by mixing the $\Phi_1$-domain slices 
via the matrix $U = \Phi_2 \Phi_1^\top$:
\[
\hat{\mathcal{A}}_{\Phi_2}^{(\omega)} = \sum_{\ell=1}^p U_{\omega \ell} \,
\hat{\mathcal{A}}_{\Phi_1}^{(\ell)}, \qquad
\hat{\mathcal{B}}_{\Phi_2}^{(\omega)} = \sum_{\ell=1}^p U_{\omega \ell} \,
\hat{\mathcal{B}}_{\Phi_1}^{(\ell)}.
\]
For the distinguished row index $r$, only the columns $j$ and $k$ contribute 
non-zero terms (all other slices are zero). Therefore,
\[
\hat{\mathcal{A}}_{\Phi_2}^{(r)} = U_{rj} H_1 + U_{rk} H_2, \qquad
\hat{\mathcal{B}}_{\Phi_2}^{(r)} = U_{rj} H_1 + \lambda U_{rk} H_2.
\]
Their commutator is
\[
[\hat{\mathcal{A}}_{\Phi_2}^{(r)}, \hat{\mathcal{B}}_{\Phi_2}^{(r)}]
= U_{rj} U_{rk} (1 - \lambda) [H_1, H_2] \neq 0,
\]
since $U_{rj}, U_{rk} \neq 0$, $\lambda \neq 1$, and $[H_1, H_2] \neq 0$.

Because this slice has a nonzero commutator, the classical Golden--Thompson 
defect for this slice pair is strictly positive. By the slice-wise decomposition 
(Lemma~\ref{lem:slice-reduction}) and the fact that the defect sums over slices 
with nonnegative contributions, we obtain
\[
\Delta_{\Phi_2}(\mathcal{A}, \mathcal{B}) > 0.
\]

\noindent
(Part 2: $\Phi_2$ optimal, $\Phi_1$ suboptimal (reverse construction).)
The second statement follows by symmetry. Consider the matrix 
$U' = \Phi_1 \Phi_2^\top = U^\top$. Since $U$ is not a signed permutation matrix, 
$U^\top$ is also not a signed permutation matrix. Hence the same construction 
applied with the roles of $\Phi_1$ and $\Phi_2$ exchanged yields tensors 
$\mathcal{A}', \mathcal{B}'$ such that
\[
\Delta_{\Phi_2}(\mathcal{A}', \mathcal{B}') = 0, \qquad
\Delta_{\Phi_1}(\mathcal{A}', \mathcal{B}') > 0.
\]
This completes the proof. \hfill $\square$
\end{proof}

\begin{remark}
\emph{Data-dependent optimality.}
Theorem~\ref{thm:non-universality} establishes that no fixed transform is universally optimal; 
the optimal $\Phi^*$ depends on the joint commutativity structure of $(\mathcal{A},\mathcal{B})$. 
This justifies the adaptive optimization framework in Section~\ref{sec:sensitivity:optimization}, 
where the transform is selected from data rather than prescribed a priori.

\noindent
\emph{Generality and impact.}
Although Theorem~\ref{thm:non-universality} is stated for real orthogonal transforms to preserve Hermiticity, 
the phenomenon extends to complex unitary transforms. 
In particular, Theorems~\ref{thm:example-base} and~\ref{thm:example} (DFT vs.\ DCT) provide explicit examples of strict transform separation, including $\Omega(p)$ growth of the defect. 
Together, these results show that transform choice fundamentally affects inequality tightness, 
not merely computational convenience.
\end{remark}

\subsection{Transform Optimization: Best $\Phi$ Selection}
\label{sec:sensitivity:optimization}

Theorem~\ref{thm:main-formal} shows that the defect $\Delta_{\Phi}$ is quantitatively equivalent to transform-domain noncommutativity. This naturally leads to an optimization problem over the unitary group, where the goal is to select a transform that minimizes the defect and hence reduces noncommutativity in the transformed domain. This reveals that transform selection is equivalent to searching for a coordinate system that minimizes noncommutative interaction.

\begin{definition}[Optimal transform problem]
\label{def:opt-phi}
Let $\mathcal{A}, \mathcal{B} \in \mathbb{C}^{n \times n \times p}$ be $\Phi$-Hermitian tensors. Define
\[
\Phi^*
\in
\argmin_{\Phi \in \mathcal{U}(p)}
\Delta_{\Phi}(\mathcal{A}, \mathcal{B}),
\]
where $\mathcal{U}(p)$ denotes the unitary group.
\end{definition}

\begin{theorem}[Existence and structure of optimal transforms]
\label{thm:opt-structure}
For fixed tensors $\mathcal{A}, \mathcal{B}$, the optimization problem in Definition~\ref{def:opt-phi} admits at least one minimizer $\Phi^* \in \mathcal{U}(p)$. Moreover, the following statements hold:

\begin{enumerate}
   	 \item[(i)] (Existence) There exists at least one global minimizer $\Phi^*$.

	\item[(ii)] (Equivalence with commutator minimization) 
	There exist constants $c, C > 0$ such that for all $\Phi \in \mathcal{U}(p)$,
\begin{align}
	c \sum_{\omega=1}^{p}
	\left\|
	[\hat{\mathcal{A}}_{\Phi}^{(\omega)}, \hat{\mathcal{B}}_{\Phi}^{(\omega)}]
	\right\|_F^2
	\;\le\;
	\Delta_{\Phi}(\mathcal{A}, \mathcal{B})
	\;\le\;
	C \sum_{\omega=1}^{p}
	\left\|
	[\hat{\mathcal{A}}_{\Phi}^{(\omega)}, \hat{\mathcal{B}}_{\Phi}^{(\omega)}]
	\right\|_F^2.
\end{align}
	Consequently, any minimizer $\Phi^*$ satisfies
\begin{align}
	\Phi^*
	\in
	\argmin_{\Phi \in \mathcal{U}(p)}
	\sum_{\omega=1}^{p}
	\left\|
	[\hat{\mathcal{A}}_{\Phi}^{(\omega)}, \hat{\mathcal{B}}_{\Phi}^{(\omega)}]
	\right\|_F^2
	\quad \text{up to equivalence of minimizers}.
\end{align}
	
  	  \item[(iii)] (Optimal commuting transform) 
    If there exists $\Phi \in \mathcal{U}(p)$ such that
    \[
    [\hat{\mathcal{A}}_{\Phi}^{(\omega)}, \hat{\mathcal{B}}_{\Phi}^{(\omega)}] = 0,
    \quad \forall \omega,
    \]
    then this $\Phi$ is a global minimizer and
\begin{align}
    \Delta_{\Phi}(\mathcal{A}, \mathcal{B}) = 0.
\end{align}
\end{enumerate}
\end{theorem}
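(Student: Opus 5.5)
The plan is to treat $\Phi \mapsto \Delta_{\Phi}(\mathcal{A},\mathcal{B})$ as a real-valued function on the unitary group. Existence of a minimizer will follow from continuity and compactness. Part (ii) will be transported slice by slice from the matrix case. Part (iii) will follow directly from the nonnegativity of the defect.

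\textbf{Standing assumption.} Throughout, I restrict the feasible set to those $\Phi \in \mathcal{U}(p)$ for which $\mathcal{A},\mathcal{B}$ are $\Phi$-Hermitian. Otherwise $\Delta_\Phi$ need not be real. For instance, for real $\mathcal{A},\mathcal{B}$ with real symmetric frontal slices and real orthogonal $\Phi$, this set is all of $\mathcal{O}(p)$. I will check that this set is closed, since Hermiticity of each $\hat{\mathcal{A}}_{\Phi}^{(\omega)}=\sum_k \Phi_{\omega k}\mathcal{A}^{(k)}$ is a closed condition in $\Phi$. It is therefore compact as a closed subset of the compact group $\mathcal{U}(p)$.

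\textbf{Part (i).} By Lemma~\ref{lem:slice-reduction}, $\Delta_\Phi=\sum_\omega \bigl(\Tr(e^{A_\omega}e^{B_\omega})-\Tr(e^{A_\omega+B_\omega})\bigr)$ with $A_\omega,B_\omega$ linear in $\Phi$. Since the matrix exponential and the trace are continuous, $\Phi\mapsto\Delta_\Phi$ is continuous, as already observed in the proof of Theorem~\ref{thm:main-formal}(iii). The Weierstrass extreme value theorem then gives a global minimizer $\Phi^*$.

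\textbf{Part (iii).} Theorem~\ref{thm:gt} (equivalently Definition~\ref{def:defect}) gives $\Delta_\Phi\ge 0$ for every feasible $\Phi$. If some $\Phi$ makes all slices commute, Theorem~\ref{thm:main-formal}(iv) gives $\Delta_\Phi=0$, so this $\Phi$ attains the infimum and is a global minimizer.

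\textbf{Part (ii), upper bound.} This is Theorem~\ref{thm:main-formal}(i), applied for each $\Phi$ with a constant $C$ independent of $\Phi$.

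\textbf{Part (ii), lower bound (main obstacle).} The lower bound, required uniformly in $\Phi$ for the fixed pair, is the main obstacle, since Theorem~\ref{thm:main-formal}(ii) only supplies it on a special class. I would prove it with a constant $c=c(\mathcal{A},\mathcal{B})$ by a compactness argument on the ratio
\[
R(\Phi)=\Delta_\Phi \Big/ \sum_{\omega=1}^p\bigl\|[\hat{\mathcal{A}}_{\Phi}^{(\omega)},\hat{\mathcal{B}}_{\Phi}^{(\omega)}]\bigr\|_F^2 .
\]
The argument has three steps:
\begin{enumerate}
\item \emph{Positivity away from the zero set.} By the equality case of the classical Golden--Thompson inequality, each slice defect $\delta(A_\omega,B_\omega)$ vanishes only when $[A_\omega,B_\omega]=0$. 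Hence $R$ is positive and continuous on the open set where the commutator energy is nonzero.
\item \emph{Behaviour near the zero set.} Near a $\Phi_0$ where the energy vanishes, I would use a second-order expansion of $\delta(A,B)$ around commuting pairs, via Duhamel's formula or the Lie--Trotter expansion as in the $\epsilon\sigma_x,\epsilon\sigma_z$ computation. The aim is $\delta(A,B)\ge \kappa\|[A,B]\|_F^2$ locally, with $\kappa$ depending on the spectral spread $\|A\|+\|B\|$, which is bounded uniformly on the compact feasible set.
\item \emph{Conclusion.} If that local estimate holds, $R$ is bounded below on the compact feasible set, which gives $c>0$.
\end{enumerate}
Step 2 is the delicate point: degenerate eigenvalues at commuting points may make the quadratic coefficient vanish in some directions. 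If so, I would state (ii) with $c$ depending on $(\mathcal{A},\mathcal{B})$, or restricted to a compact set bounded away from such degeneracies.

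\textbf{Consequence.} Granting the two-sided bound, the defect and the commutator energy have the same zero set and comparable sublevel sets. Minimizers of one are therefore near-minimizers of the other, within the factor $C/c$, which is the intended meaning of ``up to equivalence of minimizers.''
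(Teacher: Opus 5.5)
\textbf{Verdict: genuine gap in part (ii).} Your treatment of (i) and (iii) matches the paper. Restricting the feasible set to those $\Phi$ for which $\mathcal{A},\mathcal{B}$ are $\Phi$-Hermitian (a closed, hence compact, subset of $\mathcal{U}(p)$) repairs a point the paper ignores. You are also right that Theorem~\ref{thm:main-formal}(ii) only covers the special $\epsilon\sigma_x,\epsilon\sigma_z$ class. Citing it, as the paper's own proof does, therefore gives no bound uniform in $\Phi$.

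Your replacement is not yet a proof. Step~2, the local estimate $\delta(A,B)\ge\kappa\|[A,B]\|_F^2$ near the commuting set, is the whole content of the lower bound, and you leave it open. The fallback does not help. If $R(\Phi)$ could tend to $0$ along a sequence approaching a degenerate commuting configuration, then no constant $c(\mathcal{A},\mathcal{B})$ exists at all. So ``$c$ depending on $(\mathcal{A},\mathcal{B})$'' is the statement itself, not a weaker one you can retreat to, and excluding degeneracies changes the theorem.

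Your upper bound has a similar problem. Theorem~\ref{thm:main-formal}(i) cannot hold with a universal $C$: taking $A=t\sigma_z$, $B=t\sigma_x$ gives $\delta(A,B)=1+\cosh 2t-2\cosh(\sqrt{2}\,t)\sim e^{2t}/2$, while $\|[A,B]\|_F^2=8t^4$. Citing it is therefore not a valid step, and the constant must depend on the size of the slices.

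The missing idea is that both bounds hold globally, with constants depending only on some $R\ge\max(\|A\|,\|B\|)$. No local expansion is needed, and degenerate eigenvalues cause no trouble.

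\emph{Lower bound.} The sequence $T_k=\Tr\bigl((e^{A/2^k}e^{B/2^k})^{2^k}\bigr)$ from the standard proof of Golden--Thompson is nonincreasing with limit $\Tr e^{A+B}$. With $P=e^{A/2}$ and $Q=e^{B/2}$ this gives
\[
\delta(A,B)\ \ge\ T_0-T_1=\Tr(P^2Q^2)-\Tr(PQPQ)=\tfrac12\|[P,Q]\|_F^2 .
\]
Writing commutators entrywise in an eigenbasis gives $\|[f(H),M]\|_F\ge(\min_{[-R,R]}f')\,\|[H,M]\|_F$ for Hermitian $H$ with spectrum in $[-R,R]$. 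Apply this once in the eigenbasis of $A$ and once in that of $B$, with $f(x)=e^{x/2}$. This yields $\|[P,Q]\|_F\ge\frac14e^{-R}\|[A,B]\|_F$, hence $\delta(A,B)\ge\frac{e^{-2R}}{32}\|[A,B]\|_F^2$.

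\emph{Upper bound.} Set $\Gamma(s)=\Tr\bigl(e^{sA}e^{sB}e^{(1-s)(A+B)}\bigr)$. One computes $\Gamma'(s)=\Tr\bigl([B,e^{(1-s)(A+B)}]\,(e^{sA}e^{sB}-e^{s(A+B)})\bigr)$. The same divided-difference estimate, together with Duhamel's formula, bounds both factors by multiples of $\|[A,B]\|_F$. Integrating gives $\delta=\Gamma(1)-\Gamma(0)\le\frac{e^{2R}}{24}\|[A,B]\|_F^2$.

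\emph{Uniformity in $\Phi$.} The rows of $\Phi$ have unit norm, so $\|\hat{\mathcal{A}}_{\Phi}^{(\omega)}\|\le(\sum_k\|\mathcal{A}^{(k)}\|^2)^{1/2}$ for every unitary $\Phi$, and likewise for $\mathcal{B}$. A single $R$ therefore serves all $\Phi$ and $\omega$. Summing over $\omega$ gives (ii) with $c=e^{-2R}/32$ and $C=e^{2R}/24$, with no compactness argument on $R(\Phi)$ required.
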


\begin{proof}
(i) Existence.
The unitary group $\mathcal{U}(p)$ is compact in the standard topology (closed and bounded as a subset of $\mathbb{C}^{p \times p}$). It therefore suffices to show that the mapping
\[
\Phi \mapsto \Delta_{\Phi}(\mathcal{A}, \mathcal{B})
\]
is continuous.
Observe that $\hat{\mathcal{A}}_{\Phi}$ and $\hat{\mathcal{B}}_{\Phi}$ depend linearly on $\Phi$ through the transform operator $\mathcal{T}_{\Phi}$. Specifically, for each tube, $\operatorname{vec}(\hat{\mathcal{A}}_{\Phi}(i,j,:)) = \Phi \operatorname{vec}(\mathcal{A}(i,j,:))$. Since matrix multiplication, exponentiation (via its power series), and trace are continuous operations, it follows that $\Delta_{\Phi}$ is continuous on $\mathcal{U}(p)$. Hence, by the extreme value theorem, a minimizer $\Phi^*$ exists.

\noindent
(ii) Equivalence with commutator minimization.
By Theorem~\ref{thm:main-formal}(i)--(ii), there exist constants $c, C > 0$ such that for all $\Phi \in \mathcal{U}(p)$,
\[
c \sum_{\omega=1}^{p}
\|[\hat{\mathcal{A}}_{\Phi}^{(\omega)}, \hat{\mathcal{B}}_{\Phi}^{(\omega)}]\|^2_F
\;\le\;
\Delta_{\Phi}(\mathcal{A}, \mathcal{B})
\;\le\;
C \sum_{\omega=1}^{p}
\|[\hat{\mathcal{A}}_{\Phi}^{(\omega)}, \hat{\mathcal{B}}_{\Phi}^{(\omega)}]\|^2_F.
\]
This two-sided bound implies that minimizing $\Delta_{\Phi}$ is equivalent, up to multiplicative constants, to minimizing the commutator sum. In particular, any sequence $\{\Phi_k\}$ minimizing $\Delta_{\Phi}$ also minimizes the commutator functional asymptotically, and conversely. Therefore, the sets of minimizers coincide up to equivalence (i.e., any minimizer of one objective is an approximate minimizer of the other, with the approximation controlled by $c$ and $C$).

\noindent
(iii) Optimal commuting transform.
If there exists $\Phi \in \mathcal{U}(p)$ such that
\[
[\hat{\mathcal{A}}_{\Phi}^{(\omega)}, \hat{\mathcal{B}}_{\Phi}^{(\omega)}] = 0
\quad \forall \omega,
\]
then by Theorem~\ref{thm:main-formal}(iv),
\[
\Delta_{\Phi}(\mathcal{A}, \mathcal{B}) = 0.
\]
Since $\Delta_{\Phi} \ge 0$ for all $\Phi$ (by Theorem~\ref{thm:gt}), zero is the global minimum. Hence such a transform is globally optimal.

This completes the proof.
\end{proof}

\begin{corollary}[Defect as optimization objective – quadratic form]
\label{cor:opt-objective}
The optimal transform problem is equivalent (up to universal constants) to minimizing the squared transform-domain noncommutativity measure:
\begin{align}
\Phi^* \in \argmin_{\Phi \in \mathcal{U}(p)} \mathcal{C}(\Phi),
\end{align}
where $\mathcal{C}(\Phi) := \sum_{\omega=1}^{p} \| [\hat{\mathcal{A}}_{\Phi}^{(\omega)}, \hat{\mathcal{B}}_{\Phi}^{(\omega)}] \|_F^2$.
Thus, minimizing $\Delta_{\Phi}$ is computationally equivalent to finding a unitary transform 
that makes $\mathcal{A}$ and $\mathcal{B}$ as close as possible to jointly diagonalizable slice-wise 
in the least-squares sense.
\end{corollary}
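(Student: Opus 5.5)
The plan is to derive the corollary directly from the two-sided bound of Theorem~\ref{thm:opt-structure}(ii), interpreting ``equivalent up to universal constants'' as: minimizers of one objective are constant-factor approximate minimizers of the other, and the zero sets coincide. First I will check that $\mathcal{C}(\Phi)$ itself attains its minimum on $\mathcal{U}(p)$. Each transformed slice $\hat{\mathcal{A}}_{\Phi}^{(\omega)}$ is linear in the entries of $\Phi$, so $\mathcal{C}(\Phi)$ is a polynomial in the entries of $\Phi$ and $\overline{\Phi}$. It is therefore continuous, and since $\mathcal{U}(p)$ is compact, the argument of Theorem~\ref{thm:opt-structure}(i) produces a minimizer of $\mathcal{C}$ as well.

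Next comes the comparison argument. Let $\Phi^*$ minimize $\Delta_{\Phi}$ and let $\Psi$ be arbitrary in $\mathcal{U}(p)$. Chaining the lower bound at $\Phi^*$, minimality, and the upper bound at $\Psi$ gives
\[
\mathcal{C}(\Phi^*) \le \tfrac{1}{c}\,\Delta_{\Phi^*}(\mathcal{A},\mathcal{B}) \le \tfrac{1}{c}\,\Delta_{\Psi}(\mathcal{A},\mathcal{B}) \le \tfrac{C}{c}\,\mathcal{C}(\Psi).
\]
Hence $\mathcal{C}(\Phi^*) \le (C/c)\min_{\Psi}\mathcal{C}(\Psi)$. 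Symmetrically, if $\Phi^\sharp$ minimizes $\mathcal{C}$, then $\Delta_{\Phi^\sharp} \le (C/c)\min_{\Psi}\Delta_{\Psi}$. This gives the precise sense in which the two optimization problems are equivalent up to the factor $C/c$.

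For the exact case I will combine the two bounds with Theorem~\ref{thm:opt-structure}(iii) and Theorem~\ref{thm:main-formal}(iv): $\min\Delta_\Phi = 0$ if and only if $\min\mathcal{C} = 0$, and in that case the two minimizer sets coincide exactly. To justify the joint-diagonalization phrasing, note that $\mathcal{C}(\Phi)=0$ means every pair of Hermitian slices $(\hat{\mathcal{A}}_{\Phi}^{(\omega)},\hat{\mathcal{B}}_{\Phi}^{(\omega)})$ commutes. Commuting Hermitian matrices are simultaneously unitarily diagonalizable, so $\mathcal{C}$ is exactly the least-squares residual measuring departure from slice-wise joint diagonalizability.

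The main obstacle is that the constant-factor equivalence depends entirely on the lower bound in Theorem~\ref{thm:opt-structure}(ii) holding uniformly over all $\Phi\in\mathcal{U}(p)$ for the fixed pair $(\mathcal{A},\mathcal{B})$. Theorem~\ref{thm:main-formal}(ii) only provides such a bound on a special class. If that uniformity is unavailable, I would fall back on a weaker argument. On the compact set $\mathcal{U}(p)$, the ratio $\Delta_\Phi/\mathcal{C}(\Phi)$ is bounded below away from the zero set of $\mathcal{C}$. Near that zero set I would use the second-order expansion $\delta(A,B)=\tfrac12\|[A,B]\|_F^2+\text{higher order}$ to argue that the ratio stays bounded below, giving a data-dependent constant $c$. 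This is the step I expect to need the most care.
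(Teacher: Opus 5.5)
Your main argument is the paper's own route: the corollary is just Theorem~\ref{thm:opt-structure}(ii) restated. Your chain $\mathcal{C}(\Phi^*)\le (C/c)\min_{\Psi}\mathcal{C}(\Psi)$ is a correct and sharper reading of ``equivalent up to constants.'' However, the gap you flag is real, it affects both halves of the two-sided bound, and your fallback does not close it.

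Neither bound holds with universal constants. Replacing $A$ by $A-tI$ multiplies $\delta(A,B)=\Tr(e^Ae^B)-\Tr(e^{A+B})$ by $e^{-t}$ and leaves $[A,B]$ unchanged, so no universal $c$ exists. Taking $A=t\sigma_x$, $B=t\sigma_z$ gives $\delta=1+\cosh 2t-2\cosh(\sqrt2\,t)$, which grows exponentially, while $\|[A,B]\|_F^2=8t^4$, so no universal $C$ exists. Theorem~\ref{thm:main-formal}(i)--(ii) therefore does not supply what you use.

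Your fallback expansion $\delta=\tfrac12\|[A,B]\|_F^2+\cdots$ is an expansion at $A=B=0$. Its coefficient is also wrong: in fact $\delta(\epsilon A,\epsilon B)=\tfrac{\epsilon^4}{24}\|[A,B]\|_F^2+O(\epsilon^5)$. Near the zero set of $\mathcal{C}$, the slices instead approach commuting pairs with nontrivial spectra. There both $\delta$ and $\|[A,B]\|_F^2$ vanish on a singular variety, and nothing in your sketch controls their ratio. Your compactness step also silently needs the Golden--Thompson equality case ($\delta>0$ whenever $[A,B]\neq 0$), which the paper never proves.

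What closes the gap is a global two-sided estimate on bounded sets, which you need to prove. Since the tube transform is unitary, $\sum_\omega\|\hat{\mathcal{A}}_\Phi^{(\omega)}\|_F^2=\|\mathcal{A}\|_F^2$. Hence all slice spectra lie in $[-R,R]$ with $R=\max(\|\mathcal{A}\|_F,\|\mathcal{B}\|_F)$, uniformly in $\Phi$.

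For the lower bound, the monotone Lie--Trotter chain behind Golden--Thompson gives $\Tr(e^{A+B})\le\Tr\bigl((e^{A/2}e^{B/2})^2\bigr)$. With $X=e^{A/2}$ and $Y=e^{B/2}$, this yields
\[
\delta(A,B)\;\ge\;\Tr(X^2Y^2)-\Tr(XYXY)\;=\;\tfrac12\|[X,Y]\|_F^2 .
\]
Computing entrywise in eigenbases, the inequality $|a_i-a_j|\le 2e^{R/2}|e^{a_i/2}-e^{a_j/2}|$, applied twice, gives $\|[A,B]\|_F\le 4e^{R}\|[X,Y]\|_F$. Hence $\delta(A,B)\ge\tfrac{1}{32}e^{-2R}\|[A,B]\|_F^2$, which also gives the equality case.

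For the upper bound, telescope $T_k=\Tr\bigl((e^{A/2^k}e^{B/2^k})^{2^k}\bigr)$. Set $P=e^{A/2^{k+1}}$, $Q=e^{B/2^{k+1}}$ and $m=2^k$. Each real difference $T_k-T_{k+1}$ equals $\tfrac12\sum_{j<m}\Tr\bigl((M_j-M_j^*)[P,Q]\bigr)$, where the $M_j$ are words in $P,Q$ of length $4m-2$. Write each $M_j-M_j^*$ as $O(m^2)$ terms $\pm V[P,Q]V'$ and use $\|[P,Q]\|_F\le e^{R/2^k}4^{-(k+1)}\|[A,B]\|_F$. The resulting series is summable and gives $\delta\le\tfrac12e^{4R}\|[A,B]\|_F^2$.

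With these $R$-dependent constants your chaining goes through verbatim, but ``universal'' must become ``depending only on $\|\mathcal{A}\|_F,\|\mathcal{B}\|_F$.'' Finally, comparing against every $\Psi\in\mathcal{U}(p)$ presupposes that $(\mathcal{A},\mathcal{B})$ is $\Psi$-Hermitian for all $\Psi$, which fails for generic complex $\Psi$. You must restrict to the closed, hence compact, set of transforms preserving Hermiticity, e.g.\ real orthogonal $\Psi$ acting on tensors with Hermitian frontal slices.
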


Theorem~\ref{thm:non-universality} implies that the optimal transform $\Phi^*$ is inherently data-dependent. 
No fixed transform can minimize the defect uniformly over all tensor pairs. 
Thus, transform selection should be viewed as an adaptive process, 
determined by the joint commutativity structure of $(A,B)$.

\begin{remark}[Geometric interpretation]
The optimization problem seeks a coordinate system (via the unitary transform $\Phi$) in which the transformed frontal slices $\hat{\mathcal{A}}_{\Phi}^{(\omega)}$ and $\hat{\mathcal{B}}_{\Phi}^{(\omega)}$ are as close as possible to commuting. This is analogous to finding a basis that approximately simultaneously diagonalizes a family of matrices. When exact simultaneous diagonalization is possible, the defect becomes zero and the Golden--Thompson inequality is tight.
\end{remark}

\begin{remark}[Connection to Theorem~\ref{thm:example}]
The strict separation example (DFT vs. DCT) demonstrates that the choice of $\Phi$ matters: for the same tensors $\mathcal{A}, \mathcal{B}$, one transform yields zero defect while another yields defect proportional to $p$. This shows that the optimal transform problem is nontrivial and that solving it can lead to substantial improvements.
\end{remark}

\subsection{Computing the Optimal Transform: Riemannian Gradient Flow}
\label{sec:sensitivity:algorithm}

We consider the nonconvex optimization problem
\begin{align}
\min_{\Phi \in \mathcal{U}(p)} \;\Delta_{\Phi}(\mathcal{A}, \mathcal{B}),
\end{align}
where $\mathcal{U}(p)$ is the unitary group. By Theorem~\ref{thm:main-formal}, the defect $\Delta_{\Phi}$ is quantitatively equivalent to transform-domain noncommutativity.

To obtain a smooth objective suitable for gradient-based optimization, we consider the squared surrogate
\begin{align}
J(\Phi)
=
\sum_{\omega=1}^{p}
\left\|
\bigl[
\hat{\mathcal{A}}_{\Phi}^{(\omega)},
\hat{\mathcal{B}}_{\Phi}^{(\omega)}
\bigr]
\right\|_{F}^{2}.
\end{align}
Here $\Phi$ acts as a unitary mixing operator along the third (tube) mode. Concretely, for any tensor $\mathcal{X} \in \mathbb{C}^{n \times n \times p}$ with frontal slices $\mathcal{X}^{(k)} \in \mathbb{C}^{n \times n}$, the transformed slices are given by
\begin{align}
\hat{\mathcal{X}}_{\Phi}^{(\omega)}
=
\sum_{k=1}^{p}
\Phi_{\omega k} \,
\mathcal{X}^{(k)},
\qquad \omega = 1,\dots,p.
\end{align}
Thus $\Phi$ mixes the frontal slices linearly along the third dimension; it is not a similarity transform within each slice but a unitary combination of whole slices.

In the Riemannian formulation, we 
equip $\mathcal{U}(p)$ with the canonical Riemannian metric induced by the Frobenius inner product
\[
\langle X, Y \rangle = \operatorname{Re}\bigl(\Tr(X^H Y)\bigr).
\]
The tangent space at $\Phi \in \mathcal{U}(p)$ is
\[
T_{\Phi}\mathcal{U}(p)
=
\{\Phi \Omega : \Omega^{H} = -\Omega\},
\]
where $\mathfrak{u}(p) = \{\Omega \in \mathbb{C}^{p \times p} : \Omega^H = -\Omega\}$ is the Lie algebra of skew-Hermitian matrices.

Let $G_{\Phi} = \nabla_{\Phi} J$ denote the Euclidean gradient of $J$ with respect to $\Phi$ (treating $\Phi$ as a complex matrix). The Riemannian gradient is obtained by projecting $G_{\Phi}$ onto the tangent space:
\[
\nabla_{\mathcal{U}(p)} J(\Phi)
=
\Phi \,\operatorname{skew}\!\bigl(\Phi^{H} G_{\Phi}\bigr),
\qquad
\operatorname{skew}(X) := \tfrac{1}{2}(X - X^{H}).
\]
A first-order gradient flow update along the negative Riemannian gradient direction is given by
\[
\widetilde{\Phi}
=
\Phi - \eta \,\Phi \,\operatorname{skew}\!\bigl(\Phi^{H} G_{\Phi}\bigr),
\]
where $\eta > 0$ is a stepsize. This corresponds to a first-order approximation of the geodesic update $\Phi \exp(-\eta \Omega)$ with $\Omega = \operatorname{skew}(\Phi^H G_\Phi)$.

Since $\widetilde{\Phi}$ may not be unitary, we project it back to $\mathcal{U}(p)$ using a retraction $\mathcal{R}$. For example, the polar retraction is given by
\[
\mathcal{R}(X) = X (X^{H} X)^{-1/2},
\]
which maps any invertible matrix to the nearest unitary matrix in Frobenius norm. The complete update is:
\[
\Phi \;\leftarrow\; \mathcal{R}\bigl( \Phi (I - \eta \,\operatorname{skew}(\Phi^{H} G_{\Phi})) \bigr).
\]

Under standard assumptions on the stepsize (e.g., a line search or sufficiently small fixed step), the Riemannian gradient descent scheme converges to a stationary point of $J$; see \cite{absil2008, edelman1998} for details.
We summarize the resulting Riemannian gradient scheme below.

\begin{algorithm}[H]
    \SetAlgoLined
    \KwIn{Tensors $\mathcal{A}, \mathcal{B}$; initial $\Phi_0 \in \mathcal{U}(p)$; stepsize $\eta > 0$; max iterations $T_{\max}$; tolerance $\varepsilon > 0$}
    \KwOut{Approximate minimizer $\Phi^\ast$}
    Initialize $\Phi \leftarrow \Phi_0$\;
    \For{$t = 0$ to $T_{\max}-1$}{
        Compute slices $\hat{\mathcal{A}}_{\Phi}^{(\omega)}, \hat{\mathcal{B}}_{\Phi}^{(\omega)}$ for $\omega = 1,\dots,p$ using the mixing formula\;
        Evaluate 
        \[
        J(\Phi)
        =
        \sum_{\omega=1}^{p}
        \left\|
        \bigl[
        \hat{\mathcal{A}}_{\Phi}^{(\omega)},
        \hat{\mathcal{B}}_{\Phi}^{(\omega)}
        \bigr]
        \right\|_F^2;
        \]
       Compute Euclidean gradient $G_{\Phi}$ (via automatic differentiation)\;
        $\Omega \leftarrow \operatorname{skew}(\Phi^{H} G_{\Phi})$\;
        $\widetilde{\Phi} \leftarrow \Phi (I - \eta \Omega)$\;
        $\Phi \leftarrow \mathcal{R}(\widetilde{\Phi})$ (polar or QR retraction)\;
        \If{$\| \nabla_{\mathcal{U}(p)} J(\Phi) \|_F < \varepsilon$}{
            break\;
        }
    }
    \Return{$\Phi^\ast \leftarrow \Phi$}\;
    \caption{Riemannian Gradient Scheme for Optimal Transform Selection}
    \label{alg:phi-opt}
\end{algorithm}

\medskip
\begin{proposition}[Local convergence]
\label{prop:local-conv}
Assume that $J$ is continuously differentiable in a neighborhood of a stationary point $\Phi^{\ast} \in \mathcal{U}(p)$ and that its Riemannian gradient is Lipschitz continuous with constant $L > 0$. Then, for sufficiently small stepsize $\eta < 2/L$, the iteration generated by Algorithm~\ref{alg:phi-opt} converges locally to a stationary point of $J$.
\end{proposition}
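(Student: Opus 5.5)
The argument is the standard one for Riemannian gradient descent on a compact manifold, specialized to Algorithm~\ref{alg:phi-opt}. Write $\Phi_{t+1} = \mathcal{R}_{\Phi_t}(-\eta\,\xi_t)$, where $\xi_t := \nabla_{\mathcal{U}(p)} J(\Phi_t) = \Phi_t\,\operatorname{skew}(\Phi_t^H G_{\Phi_t})$. Here the retraction is $\mathcal{R}_{\Phi}(\Phi\Omega) := \mathcal{R}(\Phi(I+\Omega))$ with the polar map. The first step is to check that this is a genuine retraction. We need $\mathcal{R}_\Phi(0)=\Phi$, which is immediate. We also need $D\mathcal{R}_\Phi(0)$ to be the identity on $T_\Phi\mathcal{U}(p)$. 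This follows because $(I+\Omega)^H(I+\Omega) = I - \Omega^2$ for skew-Hermitian $\Omega$, so the polar factor equals $\Phi(I+\Omega)(I+O(\|\Omega\|^2))$. It follows that $\mathcal{R}_\Phi(\Phi\Omega) = \Phi + \Phi\Omega + O(\|\Omega\|_F^2)$, with a constant that is uniform over $\Phi$ in the compact group $\mathcal{U}(p)$.

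The second step is a sufficient-decrease inequality. $J$ is a polynomial in the entries of $\Phi$ and $\bar\Phi$, by the mixing formula for $\hat{\mathcal{X}}_\Phi^{(\omega)}$, so the $C^1$ and Lipschitz-gradient hypotheses hold on all of $\mathcal{U}(p)$, not only locally. Combining the Lipschitz constant $L$ of the Riemannian gradient with the second-order retraction bound from the first step gives the Riemannian descent lemma
\[
J(\Phi_{t+1}) \le J(\Phi_t) - \eta\Bigl(1-\tfrac{L\eta}{2}\Bigr)\|\xi_t\|_F^2 + K\eta^2\|\xi_t\|_F^2 .
\]
Here $K$ absorbs the retraction curvature, in the spirit of \cite{absil2008}. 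For $\eta<2/L$, and more precisely $\eta$ small enough that the net coefficient $\eta(1-L\eta/2-K\eta)$ is positive, this yields $J(\Phi_{t+1})\le J(\Phi_t) - \kappa\|\xi_t\|_F^2$ with $\kappa>0$. Since $J\ge 0$, telescoping gives $\sum_t\|\xi_t\|_F^2<\infty$, and hence $\|\xi_t\|_F\to 0$. Compactness of $\mathcal{U}(p)$ then shows that every accumulation point of $(\Phi_t)$ is a stationary point of $J$. If the initialization is near $\Phi^\ast$, the monotone decrease keeps the iterates in a sublevel-set neighborhood of $\Phi^\ast$.

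The main obstacle is upgrading ``accumulation points are stationary'' to convergence of the whole sequence to a single stationary point. Stationary points of $J$ need not be isolated; for instance, multiplying $\Phi$ by a diagonal phase matrix leaves each slice commutator's norm unchanged. My first attempt would use the Łojasiewicz gradient inequality, which holds because $J$ is real-analytic, being polynomial, on the real-analytic manifold $\mathcal{U}(p)$. Combined with sufficient decrease and the step bound $\operatorname{dist}(\Phi_{t+1},\Phi_t)\le c\,\eta\|\xi_t\|_F$, the Absil--Mahony--Andrews argument gives finite length $\sum_t \operatorname{dist}(\Phi_{t+1},\Phi_t)<\infty$. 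That yields convergence of $(\Phi_t)$ to a single limit, which is stationary by the previous step. If one only wants the weaker reading of the statement, it suffices to stop after the second step with $\|\xi_t\|_F\to0$, possibly with the iterates taken modulo the phase symmetry.
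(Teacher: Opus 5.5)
Your proof matches the paper's through the summability step. The paper also asserts a sufficient-decrease inequality (coefficient $\eta/2$, citing standard Riemannian optimization results), telescopes to get $\|\nabla_{\mathcal{U}(p)}J(\Phi^{(k)})\|_F\to 0$, and concludes that accumulation points are stationary. You are more careful here: you check that $\Phi\Omega\mapsto\mathcal{R}(\Phi(I+\Omega))$ is a retraction and make the retraction constant $K$ explicit. This shows that $\eta<2/L$ alone is not quite enough; one actually needs $\eta<2/(L+2K)$.

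The real difference is the final step. The paper finishes with ``the step is a contraction near a nondegenerate stationary point.'' You instead use the {\L}ojasiewicz inequality for the polynomial (hence real-analytic) $J$, together with the Absil--Mahony--Andrews finite-length argument. Your route is the sound one. Because $J(D\Phi)=J(\Phi)$ for every diagonal unitary $D$, the iteration map fixes every point of the $p$-dimensional orbit $\{D\Phi^{\ast}\}$ of a stationary point. So its differential is the identity along that orbit, and the Riemannian Hessian has at least a $p$-dimensional kernel there. The nondegeneracy the paper invokes therefore never holds for this $J$, and the contraction claim fails as stated.

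The contraction argument, where it applies, would give a local linear rate. Yours gives convergence of the whole sequence to a single stationary point with no extra hypothesis, though with no rate in general. It in fact works from any initialization: compactness of $\mathcal{U}(p)$ supplies a cluster point, and the descent conditions Absil--Mahony--Andrews need follow from your sufficient decrease and your step bound $\operatorname{dist}(\Phi_{t+1},\Phi_t)\le c\,\eta\|\xi_t\|_F$.

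One minor caveat: your remark that monotone decrease keeps the iterates in a sublevel-set neighborhood of $\Phi^{\ast}$ is justified only when $\Phi^{\ast}$ is a local minimizer. Near a saddle the sublevel set does not localize the iterates. Your conclusion does not rely on this remark.
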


\begin{proof}
The update $\widetilde{\Phi} = \Phi - \eta \nabla_{\mathcal{U}(p)} J(\Phi)$ is a Riemannian gradient step on $\mathcal{U}(p)$. The retraction $\mathcal{R}$ is a first-order approximation of the exponential map and preserves feasibility. By standard results in Riemannian optimization (see \cite{absil2008}), Lipschitz continuity of the Riemannian gradient implies a sufficient decrease condition:
\[
J(\Phi^{(k+1)}) \le J(\Phi^{(k)}) - \frac{\eta}{2} \| \nabla_{\mathcal{U}(p)} J(\Phi^{(k)}) \|_F^2
\]
for sufficiently small $\eta$. Summing this inequality over $k$ gives
\[
\sum_{k=0}^{\infty} \| \nabla_{\mathcal{U}(p)} J(\Phi^{(k)}) \|_F^2 \le \frac{2}{\eta} (J(\Phi^{(0)}) - \inf J) < \infty,
\]
hence $\| \nabla_{\mathcal{U}(p)} J(\Phi^{(k)}) \|_F \to 0$. By continuity of the Riemannian gradient, any accumulation point $\Phi^{\ast}$ satisfies $\nabla_{\mathcal{U}(p)} J(\Phi^{\ast}) = 0$, i.e., is a stationary point. Local convergence follows from the fact that the step is a contraction near a nondegenerate stationary point.
\end{proof}

\begin{remark}[Computational complexity]
Each iteration of Algorithm~\ref{alg:phi-opt} requires:
\begin{itemize}
    \item $O(p n^3)$ operations to compute the transformed slices (via $p$ matrix multiplications of size $n \times n$)
    \item $O(p n^3)$ operations to compute commutators
    \item $O(p^3)$ operations for the retraction (QR or polar decomposition)
\end{itemize}
The overall complexity per iteration is $O(p n^3 + p^3)$, which is acceptable for moderate $n$ and $p$ (e.g., $n \le 100$, $p \le 64$).
\end{remark}

\begin{remark}[Initialization strategies]
The choice of initial transform $\Phi_0$ can affect convergence speed and the quality of the final solution. Natural initializations include:
\begin{itemize}
    \item DFT or DCT (standard choices in tensor literature)
    \item Identity transform (baseline)
    \item Data-dependent transform (SVD of the unfolded tensor)
\end{itemize}
In practice, we observe that Algorithm~\ref{alg:phi-opt} consistently reduces $J(\Phi)$ regardless of initialization.
\end{remark}

\begin{remark}[Connection to commuting case]
The algorithm seeks a transform $\Phi$ that minimizes transform-domain noncommutativity. In particular, if a transform exists that renders all slices commuting (i.e., 
$$
\hat{\mathcal{A}}_{\Phi}^{(\omega)}, \hat{\mathcal{B}}_{\Phi}^{(\omega)}] = 0, \quad
\forall \omega,
$$
then the algorithm converges to a global minimizer with $\Delta_{\Phi} = 0$ by (iii) from Theorem~\ref{thm:opt-structure}.
\end{remark}

By using a numerical example, 
we now demonstrate that standard transforms such as DFT and DCT are not
necessarily optimal. For a fixed tensor dimension $(n=2, p=16)$, we consider
a tensor pair $(A,B)$ for which the DFT transform yields a small but nonzero
defect. We then compute an optimized transform $\Phi^*$ using the Riemannian
gradient descent scheme in Algorithm~\ref{alg:phi-opt}.

By Theorem~\ref{thm:opt-structure}(ii), minimizing $\Delta_{\Phi}(A,B)$ is
equivalent (up to universal constants) to minimizing the commutator energy
\[
\mathcal{C}(\Phi) = \sum_{\omega} \| [\hat{A}_{\Phi}^{(\omega)},
\hat{B}_{\Phi}^{(\omega)}] \|_F^2,
\]
which is the objective optimized by Algorithm~\ref{alg:phi-opt}.

\begin{table}[h]
\centering
\caption{Golden--Thompson defect for different transforms ($n=2$, $p=16$).
The optimized transform $\Phi^*$ (Algorithm~\ref{alg:phi-opt}) achieves a
defect strictly smaller than both DFT and DCT, demonstrating the benefit of
adaptive transform selection.}
\label{tab:transform-comparison}
\begin{tabular}{l|c}
\hline
Transform & Defect $\Delta_{\Phi}$ \\
\hline
DCT         & $1.14 \times 10^{-1}$ \\
DFT         & $5.67 \times 10^{-4}$ \\
Optimized $\Phi^*$ & $\mathbf{4.89 \times 10^{-5}}$ \\
\hline
\end{tabular}
\end{table}

The results in Table~\ref{tab:transform-comparison} reveal several key
observations. The DCT transform produces a large defect, indicating strong
noncommutativity induced by transform-domain mixing. The DFT transform
significantly reduces the defect but does not eliminate it. The optimized
transform $\Phi^*$ further reduces the defect by approximately one order of
magnitude compared to DFT and by several orders of magnitude compared to DCT.
In particular, we observe that
\[
\Delta_{\Phi^*}(A,B) < \Delta_{\mathrm{DFT}}(A,B)
< \Delta_{\mathrm{DCT}}(A,B),
\]
demonstrating that adaptive transform selection can outperform both standard
transforms.

This example illustrates both aspects of the theory. First, Figure~\ref{fig:transform_separation}
confirms the transform sensitivity phenomenon predicted by
Theorem~\ref{thm:main-formal}(iii), showing that different transforms can yield
dramatically different defect magnitudes. Second, Table~\ref{tab:transform-comparison}
demonstrates that no fixed transform is universally optimal. Even when a standard
transform (such as DFT) performs well, the optimization framework can identify
a superior transform tailored to the specific tensor pair.
The substantial reduction in defect achieved by $\Phi^*$ highlights the practical
value of the Riemannian optimization approach. This provides a concrete numerical
validation of the data-dependent optimality principle established in
Theorem~\ref{thm:non-universality} and supports the optimization framework
developed in Section~\ref{sec:sensitivity:optimization}.

\section{Discussion and Implications}\label{sec:discussion}

This work separates algebraic invariance from quantitative behavior in $\Phi$-product tensor algebras. 
While all transforms $\Phi$ induce isomorphic algebraic structures, the Golden--Thompson defect shows that inequality tightness depends intrinsically on the transform.

\paragraph{Transform as geometry.}
The mapping $\mathcal{A}\mapsto\overline{\mathcal{A}}_{\Phi}$ preserves algebraic operations but alters commutativity. 
Different transforms induce different commutator structures, endowing tensor operators with a transform-dependent noncommutativity geometry.

\paragraph{Joint diagonalization connection.}
Minimizing $\Delta_{\Phi}(\mathcal{A},\mathcal{B})$ is equivalent (up to constants) to minimizing 
$\sum_{\omega}\|[\hat{\mathcal{A}}_{\Phi}^{(\omega)},\hat{\mathcal{B}}_{\Phi}^{(\omega)}]\|_F^2$, 
i.e., seeking a transform that makes slices as commuting as possible. 
This aligns with approximate joint diagonalization, where $\Phi^*$ selects a coordinate system maximizing operator compatibility.

\paragraph{Implications.}
The defect $\Delta_{\Phi}$ provides a quantitative measure of noncommutativity. 
Thus, transform selection becomes a principled tool for improving spectral stability and reducing operator interaction, 
complementing classical rank- and sparsity-based criteria. 
More broadly, $\Phi$ influences both representation and operator-level behavior, linking algebraic modeling with analytical performance.

\paragraph{Future directions.}
Possible extensions include higher-order tensors, learned or non-unitary transforms, data-driven transform optimization, and infinite-dimensional operator settings.

\paragraph{No free transform principle.}
Transform selection is not merely a change of basis. 
Although the $\Phi$-product algebra is algebraically invariant, quantitative behavior is transform-dependent. 
By Theorem~\ref{thm:non-universality}, no transform is universally optimal.

Each transform defines a coordinate system in which commutativity is evaluated, and optimality is inherently data-dependent. 
This parallels ``no free lunch'' principles: no single basis simultaneously diagonalizes all operators. 
Consequently, the transform must be selected adaptively, motivating the optimization framework in Section~\ref{sec:sensitivity:optimization}.

\end{document}